\theoremstyle{plain}
\newtheorem{theorem}{Theorem}[section]
\newtheorem{lemma}[theorem]{Lemma}
\newtheorem{proposition}[theorem]{Proposition}
\newtheorem{corollary}[theorem]{Corollary}
\theoremstyle{definition}
\theoremstyle{remark}
\newtheorem{remark}[theorem]{Remark}
\newcommand{\Ps}{\mathbb{P}}
\def\bin #1#2 {\left( \matrix { #1 \cr #2 \cr } \right) }
\begin{document}

\title[Pl\"ucker--Clebsch formula in higher dimension]
{Pl\"ucker--Clebsch formula in higher dimension}


\author{Ciro Ciliberto }
\address{Universit\`a di Roma \lq\lq Tor Vergata\rq\rq, Dipartimento di Matematica,
Via della Ricerca Scientifica, 00133 Roma, Italy.}
\email{cilibert@axp.mat.uniroma2.it}

\author{Vincenzo Di Gennaro }
\email{digennar@axp.mat.uniroma2.it}

\abstract Let $S\subset\Ps^r$ ($r\geq 5$) be a nondegenerate,
irreducible, smooth, complex, projective surface of degree $d$. Let
$\delta_S$ be the number of double points of a general projection
of $S$ to $\Ps^4$. In the present paper we prove that $
\delta_S\leq{\binom {d-2} {2}}$, with equality if and only if
$S$ is a rational scroll. Extensions to higher dimensions are discussed.

\medskip\noindent {\it{Keywords and phrases}}: Projective surface, Double point
formula, Rational normal scroll.

\medskip\noindent {\it{MSC2000}}\,: 14J99, 14M20, 14N15.

\endabstract

\maketitle
\section{Introduction.}
Let $X$ be a nondegenerate, irreducible, smooth,
projective variety of dimension $n\geq 1$ 
in the complex projective space $\Ps^r$ with $r\geq 2n+1$.
A general projection to $\Ps^{s}$, with $2n+1\leq s\leq r$, induces an
isomorphism of $X$ with its image. A general 
projection to $\Ps^{2n}$ induces an isomorphism of
$X$ with its image, except for a finite set of points of $X$,
which correspond to a certain number $\delta_X$ of \emph{improper double points}
of the image, i.e. double points with tangent cone formed by two
linear spaces of dimension $n$ spanning $\Ps^{2n}$. 
The \emph{double point formula} (see \cite{Fulton}, pg. 166)
expresses $\delta_X$ in terms of  invariants of $X$. When $X$ is a
curve of genus $g$ and degree $d$, the double point formula
says that $\delta_X ={\binom {d-1} {2}}-g$. This is the
classical \emph{Pl\"ucker--Clebsch formula}. In particular
$\delta_X\leq{\binom {d-1} {2}}$ and equality holds if and only
if $X$ is a rational curve.

In \S \ref {proof} of this paper we prove a similar result for
surfaces:

\begin{theorem}\label{clebsch}
Let $S\subset\Ps^r$, with $r\geq 5$,  be a nondegenerate, irreducible,
smooth, projective surface of degree $d$. Then
\begin{equation}\label{eq:clebch}
\delta_S\leq{\binom {d-2} {2}}
\end{equation}
with equality if and only if $S$ is a rational scroll. 
\end{theorem}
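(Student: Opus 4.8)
The plan is to pass from the double point formula to an intrinsic inequality for the pair $(S,H)$, where $H$ is the hyperplane class. Write $d=H^2$, let $\pi$ be the sectional genus (so that $2\pi-2=H\cdot(H+K_S)$), and set $\chi=\chi(\oc_S)$. Since $\delta_S$ depends only on $(S,H)$ and $r\ge 5$ forces $d\ge r-1\ge 4$, I may compute $\delta_S$ from the double point formula for a general $f\colon S\to\Ps^4$, namely $2\delta_S=\deg\!\big(f^{*}f_{*}[S]-c_2(f^{*}T_{\Ps^4}-T_S)\big)$. Expanding $c(f^{*}T_{\Ps^4}-T_S)=(1+H)^{5}\,c(T_S)^{-1}$ and using $H\cdot K_S=2\pi-2-d$ together with Noether's formula $12\chi=K_S^{2}+c_2(S)$, one finds
\[
2\delta_S=d^{2}-5d-10(\pi-1)-2K_S^{2}+12\chi .
\]
Thus \eqref{eq:clebch} is equivalent to the purely intrinsic inequality
\[
\Phi:=5\pi+K_S^{2}-6\chi-2\ \ge\ 0 ,
\]
and I must in addition locate all equality cases. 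A direct computation shows that a scroll over a smooth curve of genus $q$ has $\pi=q$, $\chi=1-q$, $K_S^{2}=8(1-q)$, whence $\Phi=3q$; so rational scrolls give $\Phi=0$, and the whole content of the theorem is the inequality $\Phi\ge 0$ together with the statement that equality forces $q=0$.

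The device I would use to organize the estimate is the adjoint class $K_S+H$ and the Sommese--Van de Ven classification of polarized surfaces. A short manipulation rewrites
\[
\Phi=3+\tfrac12(K_S+H)\cdot H+(K_S+H)^{2}+d-6\chi ,
\]
in which $\tfrac12(K_S+H)\cdot H=\pi-1$. I first dispose of the pairs $(S,H)$ for which $K_S+H$ is not nef. Since $r\ge 5$ rules out $(\Ps^2,\oc(1))$ and the smooth quadric, the only such pairs are the Veronese surface in $\Ps^5$, for which $\pi=0$, $K_S^{2}=9$, $\chi=1$ and hence $\Phi=1$, and the scrolls, for which $\Phi=3q$ as above. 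In this regime $\Phi\ge 0$ holds and equality occurs exactly for rational scrolls.

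When $K_S+H$ is nef, $(K_S+H)\cdot H=2\pi-2\ge 0$ gives $\pi\ge 1$, and both adjoint terms in the displayed identity are nonnegative; discarding them reduces the goal to the crude bound $6\chi<d+3$, and I must show strictness so that the nef case never attains $\Phi=0$. Running through the Enriques--Kodaira classification, the surfaces with $\chi$ bounded are immediate: for $\kappa(S)=-\infty$ one has $\chi\le 1$ and for $\kappa(S)=0$ one has $\chi\le 2$, and in each case $d\ge 4$ already yields $\Phi\ge d-3>0$, the sole exception being K3 surfaces, where I instead retain $(K_S+H)^{2}=d$ to obtain $\Phi=\tfrac52 d-9>0$.

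The genuine obstacle is the case of unbounded $\chi$, namely $\kappa(S)\in\{1,2\}$, where $6\chi$ must be defeated by the adjoint terms rather than by $d$ alone. Here I would pass to the minimal reduction, so that $K_S+H$ becomes nef and big, and then combine three ingredients: Noether's inequality, the Hodge index estimate $(K_S\cdot H)^{2}\ge K_S^{2}\,d$, and a Castelnuovo-type upper bound for $h^{0}(K_S+H)=\chi+\pi-1$ (which equals $\chi(K_S+H)$ by Kodaira vanishing), extracted from the geometry of the adjoint image of $S$, whose hyperplane sections are canonical curves of genus $\pi$. The essential point to be proved is that very ampleness of $H$ forces the sectional genus to grow fast enough that $5\pi$ dominates $6\chi$; making this estimate uniform in the degree, and verifying that it is strict away from rational scrolls, is the technical heart of the argument. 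As a guide and a sanity check, a smooth surface of degree $m$ in $\Ps^3$ satisfies $\Phi=\binom{m-2}{2}$, which is strictly positive for $m\ge 4$ and shows both that the inequality is typically far from sharp and that Noether's inequality alone is too weak to close the gap.
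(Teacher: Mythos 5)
Your reduction of \eqref{eq:clebch} to the intrinsic inequality $\Phi:=5g+K_S^2-6\chi(\mathcal O_S)-2\geq 0$ is correct and coincides with \eqref{doublepoint}; your scroll computation $\Phi=3q$, your handling of the non-nef adjoint cases (Veronese surface and scrolls, via the Sommese--Van de Ven classification), and your disposal of the bounded-$\chi$ cases $\kappa(S)\leq 0$ (including the K3 refinement, where you retain $(K_S+H_S)^2=d$) are all sound. But there is a genuine gap exactly where you locate the ``technical heart'': for $\kappa(S)\in\{1,2\}$, where $\chi$ is unbounded, you only name ingredients (Noether's inequality, the Hodge index estimate, a Castelnuovo-type bound on $h^0(K_S+H_S)=\chi+g-1$) and assert that very ampleness forces $5g$ to dominate $6\chi$, without proving any such estimate, let alone one that is uniform in $d$ and strict away from rational scrolls. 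Nothing in your sketch actually controls $\chi$ in terms of $g$ and $d$: you yourself concede Noether's inequality is too weak, and the Hodge index inequality constrains $K_S\cdot H_S=2g-2-d$, not $\chi$. Since the equality analysis is also deferred to this unproven case, the ``if and only if'' part of the statement is unproven as well.

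For comparison, the paper closes this case with three concrete inputs absent from your plan. First, the spannedness of $\mathcal N_{S,\Ps^r}(-1)$ --- this is where the hypothesis $r\geq 5$ genuinely enters, beyond excluding $(\Ps^2,\mathcal O(1))$ and the quadric --- gives $c_2(\mathcal N_{S,\Ps^r}(-1))\geq 0$, i.e. $6\chi(\mathcal O_S)-K_S^2\leq 4(g-1)+d$, hence $\delta_S-\binom{d-2}{2}\leq d-g-2$ (see \eqref{eq:vS} and \eqref{positivity}); this lets one assume $g\leq d-2$. Second, the index of speciality $e$ of $H_S$ is analysed through the Poincar\'e residue bound \eqref{pg}: $e\leq 0$ forces $p_a(S)\leq 0$, $e\geq 2$ forces $g\geq d+1$ (already excluded), and in the remaining case $e=1$ Clifford's theorem on $H_S$ yields $p_a(S)\leq g-\frac d2$ (Lemma \ref{g}, $(iii)$ and $(iv)$). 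Third, for $\kappa(S)\geq 0$ the adjunction-theoretic inequality $(K_S+H_S)^2\geq 2(p_a(S)+g-2)$ of Beltrametti--Sommese --- in effect the precise Castelnuovo-type bound on the adjoint image that you gesture at, and which the paper reproves in sharper form in Lemma \ref{cs} for Theorem \ref{clebsch2} --- reduces the goal to $4p_a(S)<3g+d-8$, which then follows at once from $p_a(S)\leq g-\frac d2$ and $g\leq d-2$. Without analogues of the first two inputs capping $g$ by $d-2$ and $p_a(S)$ by $g-\frac d2$, your three proposed tools have no visible route to a uniform strict bound, so as written the proposal is a correct framework plus a plan, not a proof.
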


In \S ~\ref {highdim} we examine the higher dimensional case,
in the attempt of proving a similar theorem, but we obtain only
partial results (see Proposition \ref{formula} and Remark
\ref{finalr}, $(iii)$ and $(iv)$). In this context there is some
evidence supporting a  conjecture to the effect that
$\delta_S\leq{\binom {d-2} {2}}-g$, where $g$ is the
sectional genus of the surface $S$. We are able to prove this only
in some cases, e.g. when the \emph{Kodaira dimension} $\kappa(S)$ of $S$ is not positive
(see Remark \ref{finalr}, $(i)$ and $(ii)$). However in \S \ref {sec:strong} we show the
intermediate inequality $\delta_S\leq{\binom {d-2}
{2}}-\frac{g}{2}$ (see Theorem \ref{clebsch2}).

\section{Proof of Theorem \ref{clebsch}}\label {proof}

\begin{remark}\label{dpformula}
$(i)$  We say
that $S\subset \Ps^r$ is a {\it{scroll}} if it is a
$\Ps^1$--bundle over a smooth curve $C$, and the restriction
of $\mathcal O_S(1)$ to a fibre is $\mathcal O_{\Ps^1}(1)$.  If
$C\cong \Ps^1$, the scroll is said to be \emph{rational}.  In this case
$S$ is isomorphic,  via projection, to a
\emph {rational normal scroll} $S'\subset{\mathbb P^{r'}}$ with $r'={\rm deg}(S)+1$ (see \cite {EHarris2}).

$(ii)$ Let $H_S$ be the general hyperplane section of a smooth surface $S\subset \mathbb P^r$.
The line
bundle $\mathcal O_S(K_S+H_S)$ is spanned, unless $S$ is either a scroll, or $S\cong \mathbb P^2$ and $\mathcal O_S(H_S)\cong \mathcal O_{\mathbb P^2}(i)$, with $1\leq i\leq 2$,  in which
cases  $\mathcal O_S(K_S+H_S)$ is not effective (see \cite {SVDV}, Theorem (0.1)). 

$(iii)$ Assume $S\subset \mathbb P^r$, with $r\geq 5$, is a smooth, irreducible, nondegenerate surface. The double point formula says that
$$
\delta_S=\frac {d(d-5)} 2-5(g-1)+6\chi(\mathcal O_S)-K^2_S,
$$
where $g$ and $K_S$ denote the \emph{sectional genus} and a \emph{canonical
divisor} of $S$, and $\chi(\mathcal O_S)=1+p_a(S)$, where $p_a(S)$
is the \emph{arithmetic genus} of $S$. Hence
\begin{equation}\label{doublepoint}
\delta_S-{\binom {d-2} {2}}=6\chi(\mathcal O_S)-K^2_S-5(g-1)-3.
\end{equation}
Note that $\delta_S=0$ if and only if $S$ is \emph{secant defective}, i.e.~$\dim({\rm Sec}(S))<5$, where ${\rm Sec}(X)$ denotes the \emph{secant variety} of a variety $X$, i.e. the Zariski closure of the union of all lines spanned by distinct points of $X$.
A  theorem of Severi's implies that a smooth surface is secant defective if and only if it is the \emph{Veronese surface} of degree 4 in $\mathbb P^5$ (see \cite {Ciro, Zak}).

\end{remark}

\begin{proposition}\label{nspanned} In the above setting, 
if $\mathcal O_S(K_S+H_S)$ is not spanned, then \eqref {eq:clebch} holds, with equality if and only if $S$ is
a rational scroll.
\end{proposition}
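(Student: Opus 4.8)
The plan is to use the classification in Remark \ref{dpformula}(ii): if $\mathcal O_S(K_S+H_S)$ is not spanned, then either $S$ is a scroll, or $S\cong\mathbb P^2$ with $\mathcal O_S(H_S)\cong\mathcal O_{\mathbb P^2}(i)$ for $i\in\{1,2\}$. So the whole proof reduces to a finite case analysis, and in each case I would simply compute the right-hand side of the double point formula \eqref{doublepoint}, namely $6\chi(\mathcal O_S)-K_S^2-5(g-1)-3$, and check that it is $\le 0$, with equality characterizing the rational scrolls.

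First I would dispose of the two exceptional $\mathbb P^2$ cases. If $\mathcal O_S(H_S)\cong\mathcal O_{\mathbb P^2}(1)$ then $S$ is a plane, but this is degenerate in $\mathbb P^r$ with $r\ge 5$ (indeed $S$ spans only a $\mathbb P^2$), so it is excluded by hypothesis. If $\mathcal O_S(H_S)\cong\mathcal O_{\mathbb P^2}(2)$ then $S$ is the Veronese surface in $\mathbb P^5$, which is secant defective, so $\delta_S=0$ by Remark \ref{dpformula}(iii); one checks $\binom{d-2}{2}=\binom{2}{2}=1>0$, so \eqref{eq:clebch} is strict. Hence neither exceptional case gives equality, and both satisfy the inequality.

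Next I would treat the scroll case, which is the heart of the matter. For a scroll over a smooth curve $C$ of genus $q$, the relevant invariants are controlled by $q$: one has $\chi(\mathcal O_S)=1-q$ (since $S$ is a $\mathbb P^1$-bundle over $C$, so $p_g=0$ and the irregularity equals $q$), and the sectional genus $g$ of the scroll equals $q$ as well, because a general hyperplane section is a section of the ruling mapping isomorphically to $C$. I would also need the value of $K_S^2$; for a scroll one has $K_S^2=8(1-q)=8\chi(\mathcal O_S)$. Substituting these into \eqref{doublepoint} gives
\begin{equation*}
\delta_S-\binom{d-2}{2}=6(1-q)-8(1-q)-5(q-1)-3=-2(1-q)+5(1-q)-3=3(1-q)-3=-3q.
\end{equation*}
Thus $\delta_S-\binom{d-2}{2}=-3q\le 0$, with equality if and only if $q=0$, i.e. if and only if $C\cong\mathbb P^1$ and $S$ is a rational scroll. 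This establishes both the inequality and the equality characterization in the scroll case.

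The main obstacle I anticipate is pinning down the numerical invariants of a scroll cleanly, especially $K_S^2$ and the relation between the sectional genus and the base-curve genus. The cleanest route is to use the projection formula and Chern-class computations on the $\mathbb P^1$-bundle $S=\mathbb P(\mathcal E)\to C$: writing $H_S$ in terms of the tautological class and a fibre, and using adjunction $K_S=-2C_0+(\text{fibre terms})$, one obtains $K_S^2=8(1-q)$ independently of the degree, and $2g-2=(K_S+H_S)\cdot H_S$ yields $g=q$. I would want to verify these identities carefully, since the equality case hinges entirely on the coefficient $-3q$ coming out exactly. Once these invariants are secured, the rest is the direct substitution above, and the proposition follows by combining the scroll computation with the disposal of the two $\mathbb P^2$ exceptions.
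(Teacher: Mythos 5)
Your proposal is correct and follows essentially the same route as the paper: invoke the Sommese--Van de Ven classification from Remark \ref{dpformula}$(ii)$, note that the Veronese surface is secant defective so $\delta_S=0$ gives strict inequality, and in the scroll case plug $\chi(\mathcal O_S)=1-g$ and $K_S^2=8(1-g)$ into \eqref{doublepoint} to obtain $\delta_S-\binom{d-2}{2}=-3g$, whence equality holds exactly for rational scrolls. Your treatment is just slightly more explicit than the paper's (ruling out the degenerate plane case and verifying that the sectional genus equals the base-curve genus), but the substance is identical.
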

\begin{proof}  If $S$ is the Veronese surface of degree 4 in $\mathbb P^5$,
we have $\delta_S=0$ and the assertion holds.
Otherwise $S$ is a scroll, hence $\chi(\mathcal O_S)=1-g$, $K^2_S=8(1-g)$.
Plugging into (\ref{doublepoint}) we obtain $\delta_S-{\binom
{d-2} {2}}=-3g$ and the assertion follows.
\end{proof}

In order to prove
Theorem \ref{clebsch}, it suffices to prove the following:

\begin{proposition}\label{spanned}
Assume that $\mathcal O_S(K_S+H_S)$ is spanned. Then \eqref {eq:clebch} holds with strict inequality.
\end{proposition}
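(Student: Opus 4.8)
The plan is to use the double point formula in the form given by equation (\ref{doublepoint}), namely
\begin{equation*}
\delta_S-{\binom {d-2} {2}}=6\chi(\mathcal O_S)-K^2_S-5(g-1)-3,
\end{equation*}
and to show that under the hypothesis that $\mathcal O_S(K_S+H_S)$ is spanned, the right-hand side is strictly negative. First I would try to bound the quantity $6\chi(\mathcal O_S)-K^2_S$ from above in terms of the sectional genus $g$ and the degree $d$, so as to reduce the problem to a numerical inequality. The natural tool here is the adjunction-theoretic setup: since $\mathcal O_S(K_S+H_S)$ is spanned, it defines a morphism (the first adjunction map), and one expects to compare the invariants of $S$ with those of the image of this morphism, exploiting classification results for surfaces whose adjoint bundle is spanned.

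The key identity linking the invariants is the sectional genus formula $2g-2=(K_S+H_S)\cdot H_S=K_S\cdot H_S+d$, which lets me eliminate $g$. I would also use Noether's formula $12\chi(\mathcal O_S)=K_S^2+c_2(S)$ to rewrite $6\chi(\mathcal O_S)-K_S^2$ as $\tfrac12(K_S^2+c_2(S))-K_S^2=\tfrac12(c_2(S)-K_S^2)$, converting the target inequality into one about the Chern numbers of $S$ together with $g$. The crux is then to produce an effective upper bound on $c_2(S)-K_S^2$, equivalently on $6\chi(\mathcal O_S)-K_S^2$, valid whenever the adjoint system is base-point free. One promising route is to run the adjunction process: when $K_S+H_S$ is spanned, either $S$ is (birationally) mapped down by the adjunction morphism to a surface $S'$ of strictly smaller degree and genus, or $S$ falls into a short list of exceptional classes (Del Pezzo surfaces, conic bundles, and the like). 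A clean double-point-formula-style inequality should persist or improve under this reduction, and the base cases can be checked directly against (\ref{doublepoint}).

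The main obstacle I anticipate is controlling the exceptional and boundary cases of the adjunction classification, where $K_S+H_S$ is spanned but the adjunction map is not birational onto its image or contracts curves: conic bundles over a curve, Del Pezzo surfaces, and surfaces with $K_S+H_S$ spanned yet $K_S$ not nef. For these one cannot simply iterate a strict-inequality reduction, so each must be handled by a separate direct estimate, verifying that $6\chi(\mathcal O_S)-K_S^2-5(g-1)-3<0$ on the nose. In particular the equality case of the weak inequality (\ref{eq:clebch}) is attained only for rational scrolls, and those are precisely excluded here because a scroll has $\mathcal O_S(K_S+H_S)$ not spanned (Remark \ref{dpformula}, $(ii)$); so the fact that we are in the \emph{spanned} regime is exactly what should force the inequality to be strict.

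To make the reduction rigorous I would invoke the structure theorem for the adjunction morphism (as in Remark \ref{dpformula}, $(ii)$, and the surrounding adjunction theory): assuming $\mathcal O_S(K_S+H_S)$ spanned and $S$ not a scroll nor $\mathbb P^2$ with the low-degree embeddings, the linear system $|K_S+H_S|$ either defines a birational morphism onto a normal surface of general type direction, or exhibits $S$ as a conic or Del Pezzo fibration. In the generic birational situation I expect $K_S$ to be nef after blowing down the finitely many $(-1)$-curves met by $H_S$ in degree one, so that $K_S\cdot H_S\geq 0$, hence $2g-2\geq d$; feeding $g\geq \tfrac{d}{2}$ together with the Miyaoka--Yau or Bogomolov-type inequality $c_2(S)\geq \tfrac13 K_S^2$ into the rewritten target should close the gap with room to spare. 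The delicate point, again, is that the low-genus and fibered cases violate such positivity of $K_S$, and for those the finitely many families must be enumerated and each checked to satisfy the strict inequality, completing the proof of Proposition \ref{spanned} and hence of Theorem \ref{clebsch}.
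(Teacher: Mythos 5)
Your opening moves match the paper's (rewrite the target via \eqref{doublepoint}, bring in the adjunction map, note that scrolls are excluded because for them $\mathcal O_S(K_S+H_S)$ is not spanned), but the quantitative core of your argument has two genuine gaps. First, your key inequality points the wrong way: to make the right-hand side of \eqref{doublepoint} negative you must bound $6\chi(\mathcal O_S)-K_S^2=\tfrac12\left(c_2(S)-K_S^2\right)$ from \emph{above}, i.e.\ you need a \emph{lower} bound on $K_S^2$; the Miyaoka--Yau/Bogomolov-type inequality $c_2(S)\geq\tfrac13 K_S^2$ is an upper bound on $K_S^2$ and hence gives a lower bound on $c_2(S)-K_S^2$ --- it yields nothing toward the target. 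Worse, inequalities of Noether/Miyaoka type concern minimal surfaces, while the double point formula involves the invariants of the embedded $S$, which need not be minimal: blow-ups depress $K_S^2$ arbitrarily, so your plan of passing to a model where $K_S$ is nef and back does not control the quantity that actually enters \eqref{doublepoint}. The paper's substitute is adjunction-theoretic and valid for $S$ itself: when $\dim\phi(S)=2$ one may assume $p_a(S)>0$, hence $\kappa(S)\geq 0$, and then \cite{BS}, Lemma 10.1.2 gives $(K_S+H_S)^2\geq 2(p_a(S)+g-2)$, i.e.\ $K_S^2\geq 2\chi(\mathcal O_S)-2(g-1)+d-4$, reducing everything to the numerical statement $4p_a(S)<3g+d-8$.

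Second, nothing in your proposal bounds $\chi(\mathcal O_S)$ (equivalently $p_a(S)$) from above in terms of $g$ and $d$, and without such a bound no Chern-number manipulation can close the inequality $6\chi(\mathcal O_S)-K_S^2<5g-2$. This is the actual heart of the paper's proof, and it comes from the hyperplane section, not from surface classification: the Poincar\'e residue sequence gives $p_g(S)\leq\sum_{i\geq1}h^1(H_S,\mathcal O_{H_S}(i))$ (inequality \eqref{pg}), and a trichotomy on the index of speciality $e$ of $H_S$ finishes --- $e\leq 0$ forces $p_a(S)\leq 0$, handled by Lemma \ref{g}, $(ii)$, using only $(K_S+H_S)^2\geq 0$; $e\geq 2$ forces $g\geq d+1$, killed by the normal-bundle positivity $c_2(\mathcal N_{S,\Ps^r}(-1))\geq 0$ of Lemma \ref{g}, $(i)$; and in the remaining case $e=1$ Clifford's theorem gives $p_a(S)\leq g-\tfrac d2$ (Lemma \ref{g}, $(iv)$), whence $4p_a(S)\leq 4g-2d<3g+d-8$. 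Finally, your proposed induction (``iterate a strict-inequality reduction'' along the adjunction process, enumerating conic bundle and Del Pezzo cases) is unsubstantiated: $\delta_S$ is attached to the embedding of $S$ in $\Ps^r$, not to the adjoint image in $\Ps^R$, so there is no evident induction step; and the paper disposes of the entire $\dim\phi(S)\leq1$ regime uniformly ($S$ birationally ruled, so $p_a(S)\leq 0$, then \eqref{rifconj} gives $\delta_S-\binom{d-2}{2}\leq -g-d+4<0$) with no enumeration of families. As written, the proposal cannot be completed without importing these two missing ingredients.
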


We collect some preliminaries in the following lemma.

\begin{lemma}\label{g} Let $S\subset\Ps^r$ ($r\geq 5$) be a nondegenerate, irreducible,
smooth, projective surface of degree $d$ and sectional genus $g$. Denote by $e$ the
\emph{index of speciality} of the general hyperplane section $H_S$ of $S$, i.e. $e:=\max\,\{t\in\mathbb Z\,:\,H^1(H_S,\mathcal
O_{H_S}(t))\neq 0\}$. \medskip

$(i)$ If $g > d-2$ then \eqref {eq:clebch} holds with strict inequality.

\medskip

$(ii)$ If $p_a(S)\leq 0$ then \eqref {eq:clebch} holds,
with equality if and only if $S$ is a rational scroll.

\medskip

$(iii)$ If $e\neq 1$ then \eqref {eq:clebch} holds, with
equality if and only if $S$ is a rational scroll.

\medskip

$(iv)$ If $e=1$ then $p_a(S)\leq g-\frac{d}{2}$.

\end{lemma}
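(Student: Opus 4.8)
The plan is to read everything off the general hyperplane section. Write $C:=H_S$, a smooth irreducible nondegenerate curve of degree $d$ and genus $g$ in $\mathbb{P}^{r-1}$, and put $\mathcal O_C(1)=\mathcal O_S(H_S)|_C$. Adjunction gives $K_C=(K_S+H_S)|_C$, hence $K_S\cdot H_S=2g-2-d$ and $\deg(K_S|_C)=2g-2-d$. Using $\chi(\mathcal O_S)=1+p_a(S)$, formula \eqref{doublepoint} reads $\delta_S-\binom{d-2}{2}=6p_a(S)-K_S^2-5g+8$, so throughout I must bound $6p_a(S)-K_S^2$ from above. The curve-theoretic engine is Serre duality on $C$, which for every $s$ identifies $h^1(C,\mathcal O_C(s))=h^0(C,(K_S-(s-1)H_S)|_C)$. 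Thus $e\ge 1$ is equivalent to $K_S|_C$ being effective, while $e\ge 2$ forces $(K_S-H_S)|_C$ effective, i.e. $2g-2-2d\ge 0$ and so $g\ge d+1$. Moreover $h^1(C,K_S|_C)=h^0(C,\mathcal O_C(1))\ge r\ge 5$, so whenever $K_S|_C$ is effective it is special and Clifford's theorem applies.

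For \textit{(iv)} I would argue as follows. Since $e=1$, Serre duality gives $h^0(C,(K_S-tH_S)|_C)=h^1(C,\mathcal O_C(t+1))=0$ for every $t\ge 1$; feeding this into the restriction sequences $0\to\mathcal O_S(K_S-(t+1)H_S)\to\mathcal O_S(K_S-tH_S)\to(K_S-tH_S)|_C\to0$ shows $H^0(S,K_S-tH_S)$ is independent of $t\ge 1$, hence $H^0(S,K_S-H_S)=0$ because $K_S-tH_S$ has no sections for $t\gg0$. The sequence $0\to\mathcal O_S(K_S-H_S)\to\mathcal O_S(K_S)\to K_S|_C\to0$ then embeds $H^0(S,K_S)$ into $H^0(C,K_S|_C)$, so $p_g(S)\le h^0(C,K_S|_C)$. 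Finally Clifford's theorem, applicable because $K_S|_C$ is special, yields $h^0(C,K_S|_C)\le\frac{2g-2-d}{2}+1=g-\frac d2$, and since $p_a(S)\le p_g(S)$ we obtain $p_a(S)\le g-\frac d2$.

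Parts \textit{(iii)} and \textit{(ii)} I would reduce to \textit{(i)} and to a spanned/non-spanned dichotomy. If $e\ge 2$ then $g\ge d+1>d-2$ and \textit{(i)} applies; if $e\le 0$ then $\mathcal O_C(1)$ is nonspecial and the same telescoping of restriction sequences (now valid for all $t\ge 0$) gives $H^0(S,K_S)=0$, i.e. $p_g(S)=0$ and $p_a(S)=-q(S)\le 0$, so \textit{(ii)} applies; rational scrolls, having $g=0$ and $e\le 0$, sit precisely in this last case, which accounts for the equality clause of \textit{(iii)}. For \textit{(ii)} itself, when $\mathcal O_S(K_S+H_S)$ is not spanned Proposition \ref{nspanned} already gives the conclusion. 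When it is spanned it is globally generated, hence nef, so $(K_S+H_S)^2\ge 0$ gives $K_S^2\ge d+4-4g$; combined with $p_a(S)\le 0$ this yields $\delta_S-\binom{d-2}{2}\le -K_S^2-5g+8\le -(d+g)+4$, which is negative because a nondegenerate surface in $\mathbb{P}^{\ge 5}$ has $d\ge 4$ and the borderline value $d=4,\,g=0$ only occurs for the (non-spanned) Veronese surface and rational scroll. Thus strict inequality holds in the spanned case, and \textit{(ii)} follows with equality exactly for rational scrolls.

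The real difficulty is \textit{(i)} in the spanned case with $g$ large. The non-spanned possibilities are again disposed of by Proposition \ref{nspanned} (a scroll gives $\delta_S-\binom{d-2}{2}=-3g<0$ since $g>d-2\ge 2$, and the $\mathbb P^2$-cases have $g=0$). In the spanned case I would use $K_S^2\ge d+4-4g$ together with the speciality estimate $p_g(S)\le\sum_{s\ge1}h^1(C,\mathcal O_C(s))$ coming from telescoping the restriction sequences, bounding each special summand by Clifford. The hard part will be that, for surfaces of large sectional genus (which tend to be of general type), these elementary nef and Clifford bounds on $K_S^2$ and $p_a(S)$ are too weak to force $6p_a(S)-K_S^2-5g+8<0$: the nef bound badly underestimates $K_S^2$ precisely when $g$ is large. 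Closing this case is the crux, and I expect it to require a genuinely sharper lower bound for $K_S^2$---for instance a Noether-type inequality on the minimal model together with control of the blow-ups imposed by the embedding---rather than the self-intersection of the adjoint bundle alone.
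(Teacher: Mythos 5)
Your parts $(ii)$, $(iii)$ and $(iv)$ are correct and essentially coincide with the paper's own arguments: $(iv)$ is the paper's proof transported by Serre duality (the paper bounds $h^1(H_S,\mathcal O_{H_S}(1))$ via Clifford applied to $\mathcal O_C(1)$ and Riemann--Roch, you apply Clifford directly to the special effective bundle $K_S|_C$ of degree $2g-2-d$; these are the same estimate), your telescoping of the restriction sequences is exactly how the paper obtains \eqref{pg}, and your treatment of the spanned case of $(ii)$ via $(K_S+H_S)^2\geq 0$, including the disposal of the borderline $d=4$, matches \eqref{rifconj}. But part $(i)$ is a genuine gap, and you have misdiagnosed what is needed to close it. You frame $(i)$ as requiring a sharper \emph{lower} bound on $K_S^2$ (Noether-type inequalities on the minimal model), which would indeed be hard. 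The paper instead proves $(i)$ unconditionally --- no spanned/non-spanned dichotomy, no surface classification --- by an \emph{upper} bound on $6\chi(\mathcal O_S)-K_S^2$ coming from the embedding: the twisted normal bundle $\mathcal N_{S,\Ps^r}(-1)$ is globally generated (it is a quotient of the spanned bundle $T_{\Ps^r}(-1)|_S$), hence its top Chern class on the surface is nonnegative, and a direct computation gives
$$
c_2(\mathcal N_{S,\Ps^r}(-1))=2(K_S^2-6\chi(\mathcal O_S))+4K_SH_S+6H_S^2\geq 0,
$$
i.e. $6\chi(\mathcal O_S)-K_S^2\leq 4(g-1)+d$. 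Plugged into \eqref{doublepoint} this yields \eqref{positivity}, namely $\delta_S-{\binom{d-2}{2}}\leq d-g-2$, and $(i)$ follows immediately.

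Note also that the gap in $(i)$ propagates: your proof of $(iii)$ in the branch $e\geq 2$ (and hence the equality analysis there) invokes $(i)$, so it too is incomplete until $(i)$ is established. Your instinct that nefness of $K_S+H_S$ plus Clifford-type speciality bounds cannot force $6p_a(S)-K_S^2-5g+8<0$ for large $g$ was accurate; the missing idea is simply that for $r\geq 5$ the positivity of $c_2(\mathcal N_{S,\Ps^r}(-1))$ converts the double point formula itself into the linear bound $d-g-2$, with no input from the adjunction map or the geography of surfaces of general type.
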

\begin{proof}
$(i)$ Let $\mathcal N_{S,\Ps^r}$ be the normal bundle of $S$ in
$\Ps^r$. We have 
\begin{equation}\label{eq:vS}
c_2(\mathcal
N_{S,\Ps^r}(-1))=2(K_S^2-6\chi(\mathcal O_S))+4K_SH_S+6H_S^2\geq 0
\end{equation}
the last inequality holding because $\mathcal N_{S,\Ps^r}(-1)$ is spanned. Then
$6\chi(\mathcal O_S)-K_S^2\leq 4(g-1)+d$. Plugging into
(\ref{doublepoint}) we obtain
\begin{equation}\label{positivity}
\delta_S-{\binom {d-2} {2}}\leq d-g-2
\end{equation}
and the assertion follows.

\smallskip

$(ii)$ By Proposition \ref{nspanned} it suffices to prove that:
{\it{if $p_a(S)\leq 0$ and $\mathcal O_S(K_S+H_S)$ is spanned then
\eqref {eq:clebch} holds with strict inequality}}. If $\mathcal O_S(K_S+H_S)$
is spanned then $(K_S+H_S)^2\geq 0$, therefore $K_S^2\geq
d-4(g-1)$. Plugging into (\ref{doublepoint}) we get
\begin{equation}\label{rifconj}
\delta_S-{\binom {d-2} {2}}\leq 6p_a(S)-g-d+4.
\end{equation}
If $p_a(S)\leq 0$ then  $\delta_S-{\binom {d-2}
{2}}\leq -g-d+4$, which is negative unless $d=4$, in which case $(ii)$ is trivial.

\smallskip

$(iii)$ From the Poincar\'e residue sequence tensored with
$\mathcal O_S(i-1)$
$$
0\to \omega_S(i-1)\to \omega_S(i)\to \omega_{H_S}(i-1)\to 0
$$
we get
\begin{equation}\label{pg}
p_g(S)\leq \sum_{i=1}^{+\infty}h^1(H_S,\mathcal O_{H_S}(i)),
\end{equation}
where $p_g(S):=h^0(S,\mathcal O_S(K_S))$ is the \emph{geometric genus} of $S$. If $e\leq 0$
from (\ref{pg}) we deduce $p_a(S)\leq p_g(S)\leq 0$, and 
 \eqref {eq:clebch} holds by $(ii)$. If $e\geq 2$
then $2g-2-2d\geq 0$, i.e. $g\geq d+1$, and by $(i)$ we have  \eqref {eq:clebch} with strict inequality. When $\delta_S={\binom {d-2} {2}}$ the
previous argument yields $e\leq 0$, so $p_a(S)\leq 0$, and
$S$ is a rational scroll by $(ii)$.

\smallskip

$(iv)$ If $e=1$,  from (\ref {pg}) we deduce $p_a(S)\leq
h^1(H_S,\mathcal O_{H_S}(1))$. By Clifford Theorem we have
$h^0(H_S,\mathcal O_{H_S}(1))\leq 1+\frac{d}{2}$. Therefore
$p_a(S)\leq h^1(H_S,\mathcal O_{H_S}(1))=h^0(H_S,\mathcal
O_{H_S}(1))-(1-g+d)\leq g-\frac{d}{2}$. \end{proof}

We are now in position to prove Proposition \ref{spanned}.

\begin{proof}[Proof of Proposition \ref{spanned}]
Consider the \emph{adjunction map} $\phi: S\to \Ps^R$ defined by
$|K_S+H_S|$. By \cite{SVDV}, Theorem (0.2) and \cite{BS}, Lemma
1.1.3, Lemma 10.1.1 and Theorem 10.1.3,  we know that if
$\dim(\phi(S))\leq 1$ then $S$ is birationally ruled. Hence
$p_a(S)\leq 0$, and Proposition \ref{spanned} follows by Lemma \ref{g}, $(ii)$.

Suppose $\dim(\phi(S))=2$. By Lemma \ref{g}, $(ii)$, we may assume
$p_a(S)>0$. Hence $\kappa(S)\geq 0$, so we may apply \cite{BS}, Lemma
10.1.2, which ensures that $(K_S+H_S)^2\geq 2(p_a(S)+g-2)$, i.e.
$ K_S^2\geq 2\chi(\mathcal O_S)-2(g-1)+d-4$. Hence by
(\ref{doublepoint}) it suffices to prove that $ 2\chi(\mathcal
O_S)-2(g-1)+d-4>6\chi(\mathcal O_S)-5(g-1)-3$, i.e. that
\begin{equation}\label{ultimariduzione}
4p_a(S)<3g+d-8.
\end{equation}
To prove this, note that by  Lemma \ref{g}, $(i)$, $(iii)$ and $(iv)$, we may
assume $g\leq d-2$,  $e=1$ and $p_a(S)\leq g-\frac{d}{2}$. So we
have
$$
4p_a(S)\leq 4g-2d<3g+d-8.
$$
This proves (\ref{ultimariduzione}),  concluding the proof of
Proposition \ref{spanned} and of Theorem \ref{clebsch}.
\end{proof}

\begin{remark} Theorem \ref {clebsch} holds even if $S\subset \mathbb P^4$ is an
irreducible, nondegenerate surface of degree $d$ with only  $\delta_S$ improper double points as singularities. 
Again \eqref {eq:clebch} holds, with equality if and only if $S$ is the projection
in $\mathbb P^4$ of a smooth rational normal scroll in $\mathbb P^{d+1}$.
There is no difficulty in adapting the above argument, hence we will not dwell on this. 
\end{remark}

\section{Results in higher dimension}\label{highdim}

Let $X\subset\Ps^r$ be a nondegenerate, irreducible, smooth,
projective variety of dimension $n$ and degree $d$, with
$r\geq 2n+1$. In view of Pl\"ucker--Clebsch formula in the 1--dimensional case and of Theorem
\ref{clebsch} for $n=2$, one may ask whether in general
\begin{equation}\label{extension}
\delta_X\leq{\binom {d-n} {2}},
\end{equation}
with equality if and only if $X$ is a \emph{rational scroll}, i.e.
the projection of a \emph{rational normal scroll} in
$\Ps^{d+n-1}$  (see \cite{EHarris2}).

\begin{remark}\label{rem:scrolls}
When $X$ is a {\it{scroll}},
i.e. when $X$ is a $\Ps^{n-1}$--bundle over a smooth curve of genus
$g$ and the restriction of $\mathcal O_X(1)$ to a fibre is
$\mathcal O_{\Ps^{n-1}}(1)$, then the double point formula gives $\delta_X-{\binom {d-n}
{2}}=-{\binom {n+1} {2}}g$.
\end{remark}

Before proceedings further, 
let us recall a geometric 
interpretation of  
$$v_X:=c_n(\mathcal N_{X,\Ps^r}(-1))$$
where $\mathcal N_{X,\Ps^r}$ is the normal bundle of
$X$ in $\Ps^r$. Note that $v_X\geq 0$ because $\mathcal N_{X,\Ps^r}(-1)$ is spanned.

Let ${\rm Tan}(X)$ be the \emph{tangential variety} of $X$, i.e. the Zariski closure of the union of all tangent spaces to $X$ at 
smooth points, which makes sense even if $X$ is singular. Denote by $t_X$ the degree of ${\rm Tan}(X)$. One has $\dim({\rm Tan}(X))\leq 2n$ and, if strict inequality holds, $X$ is called \emph{tangentially defective}, whereas $X$ is called \emph{secant defective} if $\dim({\rm Sec}(X))<2n+1$.

Note that
${\rm Tan}(X)$ is contained in ${\rm Sec}(X)$. If $X$ is smooth, then it is secant defective if and only if ${\rm Tan}(X)={\rm Sec}(X)$ (see \cite {Zak}). Hence, if $X$ is smooth and tangentially defective, then it is also secant defective, but the converse does not hold in general.

If $X$ is not tangentially defective, there are finitely many tangent spaces to $X$ containing a general point of ${\rm Tan}(X)$. Let $w_X$ be their number.  It is a question, on which we will not dwell here, whether $w_X=1$ if $X$ is smooth and not secant defective. However, one may have $w_X>1$ when $X$ is either secant defective or singular: e.g., consider the cases $X$ is the Veronese surface of degree 4 in $\Ps^5$, where $w_X=2$, and $X$ is a surface lying on a $3$--dimensional, nondegenerate cone with vertex a line in $\Ps^5$, where $w_X$ can be as large as we want. 

The following lemma is  known to the experts. We give a proof for completeness.

\begin{lemma}\label{tangdeg} If $X\subset \Ps^r$, with $r\geq 2n$, is a smooth, irreducible, nondegenerate variety of dimension $n$, then $v_X=0$ if and only if $X$ is tangentially defective, whereas $v_X=t_Xw_X\geq t_X$ if $X$ is not tangentially defective.\end{lemma}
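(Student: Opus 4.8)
The plan is to realize $v_X$ geometrically as the degree of the map from the abstract bundle of tangent spaces of $X$ onto ${\rm Tan}(X)$, counted with the multiplicity $w_X$. Write $\Ps^r=\Ps(V)$ with $V=H^0(\Ps^r,\oc(1))^\vee$ of dimension $r+1$, and let $\oc_{\Ps^r}(-1)\subset V\otimes\oc$ be the tautological subbundle. Since $X$ is smooth, the affine tangent spaces $\hat T_x\hat X\subset V$ to the cone $\hat X$ over $X$ all have dimension $n+1$ and fit together into a rank-$(n+1)$ subbundle $\ec\subset V\otimes\oc_X$ containing $\oc_X(-1)$, with $\ec/\oc_X(-1)\cong T_X(-1)$. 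Let $\pi:\Ps(\ec)\to X$ be the associated $\Ps^n$-bundle, $h=c_1(\oc_{\Ps(\ec)}(1))$, and let $\tau:\Ps(\ec)\to\Ps^r$ be the morphism induced by the inclusion $\ec\hookrightarrow V\otimes\oc_X$. By construction $\tau(\Ps(\ec))={\rm Tan}(X)$, one has $h=\tau^*H$ with $H$ the hyperplane class, and $\dim\Ps(\ec)=2n$.

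The next step is a Chern--Segre bookkeeping that identifies $v_X$ with $\int_{\Ps(\ec)}h^{2n}$. Restricting the Euler sequence to $X$ gives $0\to\oc_X(-1)\to V\otimes\oc_X\to f^*T_{\Ps^r}(-1)\to0$, whence $c(f^*T_{\Ps^r}(-1))=(1-h)^{-1}$. Twisting the normal bundle sequence by $\oc_X(-1)$, namely $0\to T_X(-1)\to f^*T_{\Ps^r}(-1)\to\nc_{X,\Ps^r}(-1)\to0$, yields
\[
c(\nc_{X,\Ps^r}(-1))=\frac{1}{(1-h)\,c(T_X(-1))}.
\]
On the other hand the sequence $0\to\oc_X(-1)\to\ec\to T_X(-1)\to0$ gives $c(\ec)=(1-h)\,c(T_X(-1))$, so the total Segre class $s(\ec)=c(\ec)^{-1}$ coincides with $c(\nc_{X,\Ps^r}(-1))$. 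Taking degree-$n$ parts and using $\pi_*(h^{2n})=s_n(\ec)$ we obtain
\[
v_X=c_n(\nc_{X,\Ps^r}(-1))=s_n(\ec)=\int_{\Ps(\ec)}h^{2n}=\int_{\Ps(\ec)}(\tau^*H)^{2n}.
\]

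The dichotomy then follows from the projection formula. If $X$ is tangentially defective, then $\dim{\rm Tan}(X)<2n=\dim\Ps(\ec)$, so $\tau$ has positive-dimensional general fibres, $\tau_*[\Ps(\ec)]=0$ in the relevant degree, and $v_X=0$. If $X$ is not tangentially defective, then $\dim{\rm Tan}(X)=2n$, the map $\tau$ is generically finite, its degree equals the number $w_X$ of tangent spaces through a general point of ${\rm Tan}(X)$, so $\tau_*[\Ps(\ec)]=w_X[{\rm Tan}(X)]$ and
\[
v_X=\int_{\Ps(\ec)}(\tau^*H)^{2n}=w_X\int_{{\rm Tan}(X)}H^{2n}=w_X\,t_X\geq t_X,
\]
using $w_X\geq1$ and $t_X\geq1$. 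In particular $v_X=0$ forces defectiveness, giving both assertions.

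I expect the routine part to be the Chern-class computation of the previous paragraph. The points requiring care are: first, that $\ec$ is a genuine subbundle, which is exactly where the smoothness of $X$ enters, since it guarantees the affine tangent spaces have constant dimension $n+1$; and second, the identification of $\deg\tau$ with the invariant $w_X$ as defined, together with the equivalence ``$\tau$ generically finite $\iff X$ not tangentially defective.'' Pinning down the conventions so that $\pi_*(h^{2n})=s_n(\ec)$ with $s(\ec)=c(\ec)^{-1}$ is then purely formal, and the sanity check $v_X=2d-2=t_X$, $w_X=1$ for the tangent developable of a rational normal curve of degree $d$ confirms the normalization.
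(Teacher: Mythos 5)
Your proof is correct, but it takes a genuinely different route from the paper's. The paper argues via a general projection $\phi:X\to\Ps^{2n-1}$: it considers the exact sequence $0\to\bigoplus_{r-2n+1}\oc_X\to\nc_{X,\Ps^r}(-1)\to\nc_\phi(-1)\to0$, where the normal sheaf $\nc_\phi$ of the projection is locally free of rank $n-1$ away from the ramification scheme $Z$ and has torsion of length $\ell_Z$ along $Z$; taking Chern classes gives $v_X=\ell_Z$, and the geometric count $\ell_Z=0$ in the tangentially defective case, $\ell_Z=t_Xw_X$ otherwise, is asserted for a general projection. You instead avoid any choice of projection: you globalize the embedded tangent spaces into the rank-$(n+1)$ bundle $\ec=P^1(\oc_X(1))^\vee$ (this is exactly where smoothness of $X$ enters, as you note), identify $c(\nc_{X,\Ps^r}(-1))$ with the Segre class $s(\ec)$ via the twisted Euler and normal-bundle sequences, and read off $v_X=\int_{\Ps(\ec)}(\tau^*H)^{2n}$ as the degree of the tangential morphism, so the dichotomy becomes an immediate application of the projection formula. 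This is more intrinsic and arguably more transparent about why the statement is true; the trade-off is that your identification $\deg\tau=w_X$ carries the same unproved enumerative content as the paper's assertion $\ell_Z=t_Xw_X$: both need that for a general point $y$ of ${\rm Tan}(X)$ each of the $w_X$ tangent spaces through $y$ arises from a single point of tangency, i.e. generic injectivity of the Gauss map, which holds for smooth varieties in characteristic zero, together with reducedness of the general fibre (generic smoothness). Since the paper states its count with the same brevity, your flagging of this point is adequate; your Segre-class bookkeeping and the conventions $\pi_*(h^{2n})=s_n(\ec)$, $s(\ec)=c(\ec)^{-1}$ are correct as written, modulo the harmless double use of $h$ for both $c_1(\oc_{\Ps(\ec)}(1))$ and the hyperplane class on $X$.
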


\begin{proof} Consider a general projection $\phi: X\to \Ps^{2n-1}$ and let $Z$ be the ramification scheme of $\phi$ on $X$. By the generality assumption about $\phi$, $Z$ is reduced of finite length $\ell_Z$. One has $\ell_Z=0$ if and only if $X$ is tangentially defective, whereas
$\ell_Z=t_Xw_X$ otherwise.  Look at the exact sequence
\begin{equation}\label{eq:rohn}
0\to \bigoplus_{r-2n+1}\mathcal O_X\to \mathcal N_{X,\Ps^r}(-1)\to \mathcal N_\phi(-1)\to 0
\end{equation}
where $\mathcal N_\phi$ is the \emph{normal sheaf} to the morphism $\phi$ (see \cite{cil}, p. 358, sequence (2.2)). This is locally free of rank $n-1$ off $Z$, where there is torsion, with lenght equal to $\ell_Z$.  Taking Chern classes of the sheaves in \eqref{eq:rohn}, the assertion follows. \end{proof}

\begin{remark}\label{rem:secdef} Let $X\subset \Ps^r$, with $r\geq 2n+1$, be a smooth, irreducible, nondegenerate of dimension $n$. Then $\delta_X=0$ if and only if $X$ is secant defective. 
If $H_X$ is a general hyperplane section of $X$, then $X$ is tangentially defective if and only if $H_X$ is secant defective, i.e. $v_X=0$ if and only if $\delta_{H_X}=0$. This is a consequence of Terracini's Lemma (see \cite {Zak}). 
\end{remark} 

Going back to the question about the validity of \eqref {extension}, the arguments in the proof of
Theorem \ref{clebsch}, based on Surface Theory, do not apply for $n\geq 3$. However,
comparing $\delta_X$ with the analogous number for a general hyperplane section of $X$, we may prove the
following:

\begin{proposition}\label{formula}
Let $X\subset\Ps^r$ be a nondegenerate, irreducible, smooth,
projective variety, of dimension $n\geq 2$ and degree $d$, with
$r\geq 2n+1$. Let  $Y:=H_X\subset \Ps^{r-1}$ be a general hyperplane
section of $X$ and $C\subset \Ps^{r-n+1}$ a general curve
section of $X$. Let $g$ and $e$ be the genus
and the index of speciality  of $C$. Then:

\medskip
$(i)$ $2(\delta_Y-\delta_X)=v_X$;

\medskip
$(ii)$ $\left[\delta_X-{\binom {d-n}
{2}}\right]-\left[\delta_Y-{\binom {d-(n-1)}
{2}}\right]=d-n-{\frac{1}{2}}v_X$;

\medskip
$(iii)$ $\delta_X\leq\delta_Y$, with equality if and only if $\delta_Y=0$;

\medskip
$(iv)$ $\left[\delta_X-{\binom {d-n}
{2}}\right]-\left[\delta_Y-{\binom {d-(n-1)} {2}}\right]\leq d-n$, with equality as in $(iii)$;

\medskip
$(v)$ $\delta_X-{\binom {d-n} {2}}\leq (n-2)(d-n)
+{\binom{n-2}{2}}$ with equality only if $n=2$ and $X$ is a rational scroll;

\medskip
$(vi)$ $\delta_X-{\binom {d-n} {2}}< -g + (n-1)(d-n)
+{\binom{n-1}{2}}$.

\medskip \noindent In particular, if either $g> (n-1)(d-n)
+{\binom{n-1}{2}}$, or $e\geq 2(n-1)$, then $\delta_X<{\binom
{d-n}{2}}$.
\end{proposition}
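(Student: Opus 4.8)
The whole proposition rests on part $(i)$: once $2(\delta_Y-\delta_X)=v_X$ is established, $(ii)$--$(iv)$ are formal and $(v)$--$(vi)$ follow by a telescoping induction on linear sections. For $(i)$ the plan is to apply Fulton's double point formula (see \cite{Fulton}) to the two general projections $X\to\Ps^{2n}$ and $Y\to\Ps^{2n-2}$. Writing $h=c_1(\mathcal O_X(1))$ and $h_Y=h|_Y$, both projections have degree $d$, so each carries the same leading term $d^2$ and
$$2\delta_X=d^2-\int_X\Big[\tfrac{(1+h)^{2n+1}}{c(T_X)}\Big]_n,\qquad 2\delta_Y=d^2-\int_Y\Big[\tfrac{(1+h_Y)^{2n-1}}{c(T_Y)}\Big]_{n-1}.$$
Using $c(T_X)=(1+h)^{r+1}/c(\mathcal N_{X,\Ps^r})$, the analogous relation on $Y$, and the normal bundle sequences for $Y\subset X\subset\Ps^r$ and $Y\subset\Ps^{r-1}\subset\Ps^r$ (whose conormal factors are both $\mathcal O_Y(1)$, giving $c(\mathcal N_{Y,\Ps^{r-1}})=c(\mathcal N_{X,\Ps^r})|_Y$), the first integrand becomes $c(\mathcal N_{X,\Ps^r})(1+h)^{2n-r}$ and the second becomes the restriction to $Y$ of the same class with one fewer factor $(1+h)$. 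Since $[Y]=h\cap[X]$, the projection formula $\int_Y\alpha|_Y=\int_X h\cdot\alpha$ converts the $Y$-integral, and a short manipulation collapses the difference of the two integrands to $[\,c(\mathcal N_{X,\Ps^r})(1+h)^{2n-r-1}\,]_n$. The binomial identity $\binom{2n-r-1}{n-i}=(-1)^{n-i}\binom{r-n-i}{n-i}$ (valid as $2n-r-1<0$) matches this, term by term, with $c_n(\mathcal N_{X,\Ps^r}(-1))=v_X$, yielding $2(\delta_Y-\delta_X)=v_X$. I would sanity-check the bookkeeping against the surface case of Remark \ref{dpformula}, $(iii)$.

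Granting $(i)$, part $(ii)$ is immediate from $\binom{d-n+1}{2}-\binom{d-n}{2}=d-n$. Parts $(iii)$ and $(iv)$ follow from $v_X\ge0$, with the equality cases governed by $v_X=0$, which by Remark \ref{rem:secdef} is equivalent to $\delta_Y=0$.

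For $(v)$ and $(vi)$ I would pass to the chain of general linear sections $X=X_n\supset X_{n-1}\supset\cdots\supset X_1=C$ and set $a_k:=\delta_{X_k}-\binom{d-k}{2}$; applying $(iv)$ to $X_k$ and its hyperplane section $X_{k-1}$ gives $a_k-a_{k-1}\le d-k$ for $2\le k\le n$. Telescoping down to $X_2=S$ and invoking Theorem \ref{clebsch} (so $a_2\le0$) yields $(v)$ via $\sum_{k=3}^n(d-k)=(n-2)(d-n)+\binom{n-2}{2}$; the equality discussion reduces, for $n\ge3$, to the incompatibility of $a_2=0$ (forcing $S$ to be a rational scroll, hence $\delta_S>0$ since a scroll is not the Veronese) with the condition $\delta_S=0$ forced by equality in the $(iv)$-step at $k=3$. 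Telescoping instead all the way down to $X_1=C$, where $a_1=\delta_C-\binom{d-1}{2}=-g$ by Pl\"ucker--Clebsch, gives $a_n\le -g+(n-1)(d-n)+\binom{n-1}{2}$; the inequality is \emph{strict} because the bottom step is, since equality there would force $\delta_C=0$, impossible as $C$ is nondegenerate in $\Ps^{r-n+1}$ with $r-n+1\ge4$, hence not a plane curve and so $\delta_C=\binom{d-1}{2}-g>0$. This proves $(vi)$.

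Finally, the \emph{in particular} clause: if $g>(n-1)(d-n)+\binom{n-1}{2}$ the right-hand side of $(vi)$ is nonpositive, so $\delta_X<\binom{d-n}{2}$. If instead $e\ge2(n-1)$, then $H^1(C,\mathcal O_C(e))\ne0$ forces, by Serre duality, $K_C-eH_C$ to be effective, i.e.\ $ed\le2g-2$; hence $g\ge(n-1)d+1$, which exceeds $(n-1)(d-n)+\binom{n-1}{2}$ for all $n\ge2$, reducing to the previous case. The main obstacle is clearly $(i)$: stating Fulton's formula in the precise form above and carrying out the Chern class reduction correctly. Once $(i)$ is secured the rest is essentially formal, the only remaining delicate points being the equality analysis in $(v)$ and the strictness argument at the bottom of the chain in $(vi)$.
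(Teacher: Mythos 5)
Your proposal is correct and follows essentially the same route as the paper: Fulton's double point formula applied to the two general projections gives $(i)$ via the same Chern class comparison (your collapse of the difference to $\left[c(\mathcal N_{X,\Ps^r})(1+h)^{2n-r-1}\right]_n$ followed by $\binom{2n-r-1}{k}=(-1)^k\binom{r-2n+k}{k}$ is a tidier packaging of the paper's term-by-term identity $\binom{2n}{i}=\sum_{h=0}^{i}(-1)^{i-h}\binom{r-2n+i-h}{i-h}\binom{r+1}{h}$ after expanding in the classes $s_i$), and $(ii)$--$(vi)$ are obtained, exactly as in the paper, by formal deductions from $v_X\geq 0$ and Remark \ref{rem:secdef}, then telescoping through the general linear sections with Theorem \ref{clebsch} at the surface level and Pl\"ucker--Clebsch at the curve level. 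The only divergences are minor and sound: in the equality analysis of $(v)$ for $n\geq 3$ you derive the contradiction between $\delta_S=\binom{d-2}{2}>0$ (rational scroll, not the Veronese) and the $\delta_S=0$ forced by equality in the $(iv)$-step, where the paper instead invokes \cite{EHarris2} to promote the scroll structure from $S$ to $X$ and compares with Remark \ref{rem:scrolls}; and in $(vi)$ you obtain strictness from $\delta_C>0$ at the bottom step, while the paper applies $(ii)$ at the surface level and uses $v_S>0$ via Lemma \ref{tangdeg} --- the same fact, by Remark \ref{rem:secdef}.
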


\begin{proof} Let $\varphi:X\to\Ps^{2n}$ and $\psi:Y\to\Ps^{2(n-1)}$ be
general  projections. Denote by $c(\mathcal
T_X)^{-1}=1+s_1+\dots+s_n$ the inverse total Chern class of $X$, with
$s_i\in A^i(X)$ (we abuse notation and denote in the same way elements
of $A^n(X)$ and their degree: we did this already a few times above).
By the double point formula (\cite{Fulton}, p.
166), $2(\delta_Y-\delta_X)$ is equal to
\begin{equation}\label{diffdelta}
\left[c(\varphi^*\mathcal T_{\Ps^{2n}})c(\mathcal
T_X)^{-1}\right]_n-\left[c(\psi^*\mathcal
T_{\Ps^{2(n-1)}})c(\mathcal
T_Y)^{-1}\right]_{n-1}=\sum_{i=0}^{n}{\binom{2n}{i}}H_X^is_{n-i}.
\end{equation}
Since $c(\mathcal
N_{X,\Ps^r})=(1+H_X)^{r+1}c(\mathcal T_X)^{-1}$, we have
\begin{equation}\label{cn}
c_n(\mathcal
N_{X,\Ps^r}(-1))=\sum_{i=0}^{n}(-1)^{n-i}{\binom{r-n-i}{n-i}}H_X^{n-i}c_i(\mathcal
N_{X,\Ps^r})
\end{equation}
$$
=\sum_{i=0}^{n}(-1)^{n-i}{\binom{r-n-i}{n-i}}H_X^{n-i}\left[\sum_{j=0}^{i}{\binom{r+1}{i-j}}H_X^{i-j}s_{j}\right]
$$
$$
=\sum_{i=0}^{n}\left[\sum_{h=0}^{i}(-1)^{i-h}{\binom{r-2n+i-h}{i-h}}{\binom{r+1}{h}}
\right]H_X^is_{n-i}.
$$
For any $i\in\{0,\dots,n\}$ and any
$r\geq 2n+1$ one has
$$
{\binom{2n}{i}}=\sum_{h=0}^{i}(-1)^{i-h}{\binom{r-2n+i-h}{i-h}}{\binom{r+1}{h}}.
$$
Comparing (\ref{diffdelta}) with (\ref{cn}) we obtain $(i)$.

Property $(ii)$  follows from $(i)$. Properties $(iii)$ and $(iv)$
follow from  $(i)$ and $(ii)$ and Remark \ref {rem:secdef}.

Iterating  $n-2$ times $(iv)$, and denoting by $S$ 
 the general surface section  of $X$, to which we apply Theorem \ref{clebsch}, we obtain
\begin{equation}\label{eq:eineq}
\delta_X-{\binom{d-n}{2}}\leq
\left[\delta_S-{\binom{d-2}{2}}\right]+
(n-2)(d-n)+{\binom{n-2}{2}}
\leq (n-2)(d-n)+{\binom{n-2}{2}}.
\end{equation}
If $n=2$, equality between the extremes 
holds only if $X$ is a rational scroll by Theorem \ref{clebsch}.
If $n>2$ the equality cannot hold. Otherwise
$S$ is a rational scroll, therefore also $X$ is a rational scroll
(see \cite {EHarris2}), hence the leftmost term in \eqref{eq:eineq}
is zero (see Remark \ref {rem:scrolls}), whereas the rightmost term is not.

Property $(vi)$ follows by applying $(ii)$
to $S$ in the middle term of \eqref {eq:eineq}, by 
recalling that $g={\binom{d-1}{2}}-\delta_C$ and noting that
$v_S>0$ because $S$, being smooth, is not tangentially defective
(see Lemma \ref {tangdeg} and   (5.37) of \cite 
{GH}).

For the final assertion  notice that if $e\geq 2(n-1)$ then $g\geq
(n-1)d +1$, hence $-g + (n-1)(d-n) +{\binom{n-1}{2}}<0$.
\end{proof}

\begin{remark}\label{finalr} $(i)$ In the setting of Proposition \ref {formula}, 
assume $X$ is not tangentially defective. Then none of its general 
linear section is tangentially defective.  
In view of Proposition \ref{formula}, $(ii)$,
one may ask whether 
\begin{equation}\label{conguno}
v_X\geq 2(d-n)
\end{equation}
or, rather
\begin{equation}\label{congunobis}
t_X\geq 2(d-n).
\end{equation}
If so, applying \eqref {conguno} to  the successive general linear sections of $X$
one would deduce 
\begin{equation}\label{congdue} \delta_X\leq{\binom {d-n}
{2}}-g.
\end{equation}

Note that, if $X$ is tangentially defective, then 
$\delta_X=0$ and \eqref {congdue} 
holds in this case, since Castelnuovo's bound implies 
$g<{\binom {d-n} {2}}$. 

In conclusion one is lead to the following:

\medskip
{\bf{Question.}} {\it{Let $X\subset\Ps^r$ be a nondegenerate,
irreducible, smooth, projective variety of dimension $n$ and
degree $d$, with $r\geq 2n+1$. Is it true that \eqref {congdue} holds,
with equality if and only if $g=0$, hence
$X$ is a rational  scroll?}}
\medskip

When $n=1$ the inequalities \eqref{conguno}, \eqref{congunobis} and
\eqref{congdue} are obvious.  In fact in this case $w_X=1$ and therefore $t_X=v_X$.
Moreover
$c_1(\mathcal
N_{X,\Ps^r}(-1))=K_X+2H_X$,  hence $v_X=t_X=2g-2+2d\geq 2(d-1)$, and ${\binom
{d-1} {2}}-\delta_X= g$ is  the Pl\"ucker--Clebsch
formula.

In  case $n=2$, in view of Proposition \ref{formula}, $(ii)$, one has $v_X>0$ because $X$, being smooth, is never tangentially defective
and, by \eqref{eq:vS}, \eqref{conguno} reads $K_X^2-6\chi(\mathcal O_X)+4g-2\geq 0$. 

If $X$ is smooth and
nondegenerate in $\Ps^4$ then (\ref{congdue}) becomes $g\leq{\binom {d-2} {2}}$, which holds by
Castelnuovo's bound. On the contrary, if $X\subset \Ps^3$
then $g={\binom {d-1} {2}}>{\binom {d-2} {2}}$ for $d\geq 3$, 
so in this case (\ref{congdue}) is false.

$(ii)$ In  case $n=2$, if $p_a(X)\leq 0$, e.g. if  $\kappa(X)\leq 0$, then (\ref{congdue}) holds.
In fact, taking into
account the proof of Proposition \ref{nspanned}, in order to prove
(\ref{congdue}) one may assume that $\mathcal O_X(K_X+H_X)$ is
spanned. In this case, by (\ref{rifconj}), one has
$$
\delta_X-{\binom {d-2} {2}}\leq 6p_a(X)-g-d+4\leq -g-d+4<-g
$$
as soon as $d\geq 5$ (the case $d=4$ is obvious).
Further cases in which we are able to prove (\ref{congdue}) are the following (we omit the proof):
complete intersections; surfaces contained in a threefold of
minimal degree;  surfaces contained in a smooth hypersurface of
$\Ps^5$ of degree $t$ with $d\geq t^3$ (in order to
prove (\ref{congdue}), one may assume  $X\subset \Ps^5$, and  any smooth surface in $\Ps^5$ is contained in
some smooth hypersurface); surfaces for which $3\chi(\mathcal
O_X)\geq d^2$; arithmetically Cohen-Macaulay surfaces contained in
a threefold of degree $s$, with $d\gg s$. Actually in the first
three cases we find  $\delta_X-{\binom {d-2} {2}}\leq -2g$,
and in a smooth threefold of minimal degree in $\Ps^5$ there are
surfaces $X$ for which $\delta_X-{\binom {d-2} {2}}= -2g$ with
$g>0$, and surfaces $X$ with $\delta_X-{\binom {d-2} {2}}> -3g$.

$(iii)$ By Proposition \ref{formula}, for  varieties $X$ (if any)
for which \eqref {congdue} fails, one has $g\leq (n-1)d$ and $e\leq 2(n-1)-1$. On the
other hand, using a similar argument as in the proof of
(\ref{pg}), one may prove that
\begin{equation}\label{acm}
p_g(X)\leq \sum_{i=n-1}^{+\infty}{\binom {i-1}
{n-2}}h^1(C,\mathcal O_{C}(i)).
\end{equation}
Since $h^1(C,\mathcal O_{C}(i))\leq g$, then
\begin{equation}\label{appr}
p_g(X)\leq \sum_{i=n-1}^{2(n-1)-1}{\binom {i-1} {n-2}}g \leq
{\binom {2(n-1)} {n-1}}(n-1)d=O(d).
\end{equation}

$(iv)$ As a consequence we can prove that {\it{if
$X\subset \Ps^r$ is arithmetically Cohen--Macaulay and $d\gg r$
then \eqref {congdue} holds with strict inequality}}. In fact, when  $X$ is
arithmetically Cohen-Macaulay, equality holds in (\ref{acm}),
and for any $i\geq 0$ one has $h^1(C,\mathcal
O_{C}(i))=\sum_{j=i+1}^{+\infty}{(d-h_{\Gamma}(j))}$
($\Gamma$ is the 
general $0$-dimensional linear section of  $X$ and $h_Z$ is, as usual, the \emph{Hilbert function} of a projective scheme $Z$). We
deduce
$$
p_g(X)=\sum_{i=n}^{+\infty}{\binom {i-1} {n-1}}(d-h_{\Gamma}(i)).
$$
When $\delta_X \geq {\binom {d-n} {2}}$, (\ref{appr}) 
says that $p_g(X)\leq O(d)$, so we have
$$
\sum_{i=n}^{+\infty}{\binom {i-1} {n-1}}(d-h_{\Gamma}(i))\leq
O(d).
$$
One sees that this is impossible if $d\gg r$.

$(v)$ With the same notation as in Proposition \ref{formula},
one has: {\it{if $n\geq 3$, $\delta_Y - {\binom {d-(n-1)}
{2}}\leq -g$ and $p_g(X)>0$, then \eqref {congdue} holds with strict inequality}}. 
In fact
(\ref{acm}) yields $e\geq 2$, thus $g>d-n$. Hence from
Proposition \ref{formula}, $(iv)$, we get $\delta_X-{\binom {d-n}
{2}}\leq  \delta_Y - {\binom {d-(n-1)} {2}}+d-n\leq -g+d-n<0$.
\end{remark}

\section{A stronger inequality}\label{sec:strong}

In this section we improve Theorem \ref{clebsch} as follows:

\begin{theorem}\label{clebsch2} With the same notation as in Theorem  \ref{clebsch},
one has
\begin{equation}\label{eq:improve}
\delta_S - {\binom {d-2} {2}}\leq -\frac{g}{2},
\end{equation}
and equality holds if and only if $S$ is a rational scroll in
$\Ps^r$.
\end{theorem}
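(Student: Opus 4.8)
The plan is to recast \eqref{eq:improve} as a lower bound for $v_S:=c_2(\mathcal N_{S,\Ps^r}(-1))$ and then rerun the adjunction-theoretic analysis behind Theorem \ref{clebsch}, carrying the extra $\tfrac g2$ through the estimates. Applying Proposition \ref{formula}, $(ii)$, with $n=2$ and the Pl\"ucker--Clebsch formula $\delta_C=\binom{d-1}{2}-g$ for the general curve section $C$, I would write
\[
\delta_S-\binom{d-2}{2}=(d-2)-g-\tfrac12 v_S ,
\]
so that \eqref{eq:improve} becomes equivalent to $v_S\ge 2(d-2)-g$. Since $S$ is smooth it is never tangentially defective, whence $v_S>0$ by Lemma \ref{tangdeg}; thus the inequality holds, strictly, as soon as $g\ge 2(d-2)$, and there $S$ is not a rational scroll. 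This disposes of the large sectional genera and leaves the range $g<2(d-2)$.

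Next I would split on whether $\mathcal O_S(K_S+H_S)$ is spanned, as in Propositions \ref{nspanned} and \ref{spanned}. If it is not spanned, $S$ is a scroll or the Veronese surface; a scroll gives $\delta_S-\binom{d-2}{2}=-3g\le-\tfrac g2$ with equality iff $g=0$, i.e. iff $S$ is a rational scroll, and the Veronese satisfies \eqref{eq:improve} strictly by direct substitution. If $\mathcal O_S(K_S+H_S)$ is spanned but the adjunction image has dimension $\le 1$, then $p_a(S)\le 0$, and \eqref{rifconj} yields $\delta_S-\binom{d-2}{2}\le -g-d+4<-\tfrac g2$. The substantial case is when the adjunction image is a surface: there $p_a(S)>0$ by Lemma \ref{g}, $(ii)$, so $\kappa(S)\ge 0$, and the bound $(K_S+H_S)^2\ge 2(p_a(S)+g-2)$ of \cite{BS}, Lemma 10.1.2, together with \eqref{doublepoint}, reduces the theorem to the numerical inequality
\[
4p_a(S)\le \tfrac52 g+d-8 ,
\]
which is strictly stronger than \eqref{ultimariduzione}.

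To bound $p_a(S)$ I would use $p_a(S)\le p_g(S)$ and the residue estimate \eqref{pg}, organised by the index of speciality $e$ of $C$; note that $e\ge 1$ here (otherwise $p_a(S)\le 0$) and that $e\ge 2$ forces $g\ge d+1$. For $e=1$, Lemma \ref{g}, $(iv)$, gives $p_a(S)\le g-\tfrac d2$, hence $4p_a(S)\le 4g-2d$, which is $\le\tfrac52 g+d-8$ exactly when $g\le 2d-\tfrac{16}{3}$; as our range is $g<2(d-2)$ this settles $e=1$ except for the top value $g=2d-5$, to be treated separately using the integrality of $p_a(S)$. For $e\ge 2$ the plain Clifford estimate $h^1(\mathcal O_C(i))\le g-\tfrac{id}2$ is not enough, and I would refine it by expanding $h^1(\mathcal O_C(i))=\sum_{j>i}\bigl(d-h^C_\Gamma(j)\bigr)$ for the general point section $\Gamma$, using that these differences are nonincreasing in $j$ and bounded above by the uniform-position values $\max\{0,\,d-(r-2)j-1\}$, as in the mechanism behind \eqref{acm} and Remark \ref{finalr}, $(iv)$, together with $g=\sum_{j\ge1}\bigl(d-h^C_\Gamma(j)\bigr)$.

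The hard part will be exactly the band $d+1\le g<2(d-2)$ with $e\ge 2$: there the sectional genus is only linear in $d$ while $p_g(S)$ can a priori be large, and I expect that neither Clifford's inequality (sharp only near $g=d+1$) nor the coarse uniform-position estimate (sharp only for near-extremal, hence high-genus, sections) closes the inequality $4p_a(S)\le\tfrac52 g+d-8$ by itself. The real work is to extract from the Hilbert function of $\Gamma$ an upper bound for $p_g(S)$ that genuinely reflects the smallness of $g$ in this band, presumably by combining the monotonicity and uniform-position constraints on the $h^1(\mathcal O_C(i))$ with the structure of surfaces of nonnegative Kodaira dimension carrying such special, low-genus sections, and then to check that equality never occurs in the spanned case, leaving the rational scroll isolated among the non-spanned surfaces as the unique equality instance.
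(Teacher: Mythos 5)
Your preliminary reductions are sound and in fact coincide with the paper's own (Remark \ref{new} and Propositions \ref{nspanned}, \ref{spanned} adapted): the identity $\delta_S-\binom{d-2}{2}=d-2-g-\frac12 v_S$ from Proposition \ref{formula}, $(ii)$, disposes of $g\geq 2(d-2)$; the non-spanned case and the case $\dim\phi(S)\leq 1$ are handled correctly; and the $e=1$ case via Lemma \ref{g}, $(iv)$, is essentially right (though your patch at $g=2d-5$ fails as stated: for $d$ even, $p_a(S)\leq g-\frac d2=\frac{3d}{2}-5$ gives $4p_a(S)\leq 6d-20$, while you need $4p_a(S)\leq 6d-21$, so integrality of $p_a(S)$ alone does not close it — one needs, e.g., the strict form of Clifford's theorem for the very ample bundle $\mathcal O_C(1)$). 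But the proposal then stops exactly where the theorem's real content lies: in the band $d+1\leq g<2(d-2)$ with $e\geq 2$ you have no effective bound on $p_a(S)$, and you concede that neither Clifford nor a uniform-position count on the Hilbert function of $\Gamma$ suffices. That concession is accurate — for $e=2$, Clifford only gives $p_a(S)\leq 2g-\frac32 d$, which meets your target $4p_a(S)<\frac52 g+d-8$ only for $g$ up to roughly $\frac{14}{11}d$, well short of $\frac{3d+1}{2}$ — so this is a genuine gap, not a deferred verification. You also make your own task harder than necessary: stopping at \cite{BS}, Lemma 10.1.2, yields the reduction with coefficient $4$ on $p_a(S)$, whereas the paper's Lemma \ref{cs} sharpens this to $K_S^2\geq 3p_a(S)-2g+d-1$, hence to $3p_a(S)<\frac52 g+d-9$, by applying Castelnuovo's bounds \eqref{Cbound}--\eqref{Cbounddue} to the adjoint surface $\Sigma\subset\Ps^R$, $R=p_a(S)+g-1$, of degree $\sigma=(K_S+H_S)^2$.

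The two ideas actually needed to close the band, and absent from your plan, are the following. First, Comessatti's bound \eqref{comes}, applied to the series $|\mathcal O_C(i)|$ for $i=1,2,3$, is strictly sharper than Clifford for these special series and yields estimates such as $h^1(C,\mathcal O_C(1))\leq \frac56 g-\frac12 d+\frac12$ and $h^1(C,\mathcal O_C(2))\leq \frac23 g-\frac23 d+\frac13$, which, fed into \eqref{pg}, verify $3p_a(S)<\frac52 g+d-9$ throughout $d<g\leq\frac{3d+1}{2}$; no refinement of the uniform-position mechanism behind \eqref{acm} produces bounds of this quality when $g$ is only linear in $d$, which is precisely the difficulty you identified. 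Second, in the top range $\frac{3d+1}{2}<g\leq 2(d-2)$ the term $h^1(C,\mathcal O_C(1))$ is too large for any such argument, and the paper replaces it by $\frac g2$ via Corollary \ref{stime}, i.e.\ $p_a(S)\leq \frac g2+h^1(C,\mathcal O_C(2))+h^1(C,\mathcal O_C(3))$; this rests on the new lower bound $t_S\geq 2(r-3)$ for the tangential degree of the (linearly normal) surface (Lemma \ref{tans}, proved by projection and induction on $r$), an input entirely outside the Hilbert-function circle of ideas you propose. Finally, the equality characterization in your last sentence is only asserted; in the paper it comes for free because every spanned case ends in the strict inequality of Lemma \ref{cs}, leaving scrolls with $g=0$ as the unique equality case.
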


\begin{remark}\label{new}
$(i)$ By Theorem \ref{clebsch}, it
suffices to prove the assertion when $g>0$. By (\ref{doublepoint}) this is
equivalent to prove that: {\it{when $g>0$, then
\begin{equation}\label{ncs}
K^2_S> 6p_a(S)-\frac{9}{2}g+8.
\end{equation}}}

$(ii)$ By the proof of Proposition \ref{nspanned} and Remark
\ref{finalr}, $(ii)$, we may  assume that $\mathcal O_S(K_S+H_S)$ is
spanned and  $p_a(S)>0$. In particular 
$g>\frac{d+1}{2}\geq 3$, otherwise $e\leq 0$, hence $p_a(S)\leq 0$
by (\ref{pg}). Moreover by (\ref{positivity}) we see that if
$g>2(d-2)$ then \eqref {eq:improve} holds with strict inequality. So
we may also assume $g\leq 2(d-2)$.
\end{remark}

\begin{lemma}\label{cs} If $g>0$ and $3p_a(S)<\frac{5}{2}g+d-9$
then \eqref {eq:improve} holds with strict inequality.
\end{lemma}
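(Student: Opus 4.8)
The plan is to reduce the assertion to the numerical inequality \eqref{ncs}, $K_S^2 > 6p_a(S) - \frac{9}{2}g + 8$, which by Remark \ref{new}, $(i)$, is equivalent to the strict form of \eqref{eq:improve}. First I would clear away the cases already understood. If $\mathcal O_S(K_S + H_S)$ is not spanned, then by Proposition \ref{nspanned} $S$ is a scroll (the Veronese surface has $g = 0$, hence is excluded by $g > 0$), and the computation in that proof gives $\delta_S - \binom{d-2}{2} = -3g < -\frac{g}{2}$; if instead $p_a(S) \leq 0$, then \eqref{rifconj} yields $\delta_S - \binom{d-2}{2} \leq 6p_a(S) - g - d + 4 \leq -g - d + 4 < -\frac{g}{2}$ for $d \geq 5$. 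Thus I may assume $\mathcal O_S(K_S + H_S)$ is spanned and $p_a(S) > 0$, so that $\kappa(S) \geq 0$.

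Under these assumptions the driving estimate is the adjunction inequality of \cite{BS}, Lemma 10.1.2, already exploited in the proof of Proposition \ref{spanned}: since $\kappa(S) \geq 0$ one has $(K_S + H_S)^2 \geq 2(p_a(S) + g - 2)$, i.e. $K_S^2 \geq 2p_a(S) - 2g + d$. Substituting this lower bound into \eqref{ncs} collapses the whole statement to the single numerical inequality
$$
4p_a(S) < \tfrac{5}{2}g + d - 8.
$$
This is precisely the standing hypothesis $3p_a(S) < \frac{5}{2}g + d - 9$ carrying one extra copy of $p_a(S)$ on the left; comparing the two shows they are compatible exactly when $p_a(S) \leq 1$, so in that range the reduction closes immediately.

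The remaining difficulty, which I expect to be the crux, is the thin band $p_a(S) \geq 2$ in which $3p_a(S) < \frac{5}{2}g + d - 9$ but $4p_a(S) \geq \frac{5}{2}g + d - 8$, where the adjunction estimate is short by exactly the term $p_a(S)$. To absorb it I would bring in the second, independent source of positivity, namely $v_S = c_2(\mathcal N_{S,\Ps^r}(-1)) > 0$ — strict because a smooth surface is never tangentially defective (Lemma \ref{tangdeg}) — which through \eqref{eq:vS} supplies the complementary bound $K_S^2 \geq 6p_a(S) - 4g - d + 10$. In the reformulation of Proposition \ref{formula}, $(ii)$, the strict form of \eqref{eq:improve} reads literally $v_S > 2(d-2) - g$, so $v_S \geq 1$ already disposes of the top of the allowed range $g > 2(d-2) - 1$, while for the smaller genera I would feed in the speciality bound $p_a(S) \leq g - \frac{d}{2}$ valid when $e = 1$ (Lemma \ref{g}, $(iv)$), treating the case $e \geq 2$ (where $g \geq d+1$) on its own. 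The genuine obstacle is the book-keeping in this overlap: each positivity bound is separately insufficient there — the $v_S$-bound forces only $g \leq 2(d-2)$ and the adjunction bound only $p_a(S) \geq 2$ — so the closure rests on playing them against the speciality estimate of Lemma \ref{g}, $(iv)$, and on checking by hand the finitely many boundary values of $(p_a(S), g, d)$.
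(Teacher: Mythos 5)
Your reduction is sound up to the substitution step, but the gap you yourself flag is fatal and not patchable by the tools you list. The bound $(K_S+H_S)^2\geq 2(p_a(S)+g-2)$ from \cite{BS}, Lemma 10.1.2, i.e. $K_S^2\geq 2p_a(S)-2g+d$, has coefficient $2$ on $p_a(S)$, so feeding it into (\ref{ncs}) requires $4p_a(S)<\frac{5}{2}g+d-8$, while the hypothesis controls only $3p_a(S)$. The exceptional band $\frac{1}{4}\left(\frac{5}{2}g+d-8\right)\leq p_a(S)<\frac{1}{3}\left(\frac{5}{2}g+d-9\right)$ is not thin: its width is $\frac{1}{12}\left(\frac{5}{2}g+d-12\right)$, which grows linearly in $d$ and $g$, so there are infinitely many triples $(p_a(S),g,d)$ to handle and no finite boundary check can close it. Moreover your patches cover only part of this band: the positivity $v_S>0$ (indeed $v_S\geq 2$ by parity, from Proposition \ref{formula}, $(i)$) sharpens (\ref{eq:vS}) to $K_S^2\geq 6p_a(S)-4g-d+11$, which verifies (\ref{ncs}) only when $g\geq 2d-5$; and Lemma \ref{g}, $(iv)$, applies only when $e=1$, where $4p_a(S)\leq 4g-2d<\frac{5}{2}g+d-8$ holds only for $g\leq 2d-6$. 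The case $e\geq 2$, which you explicitly defer without an argument, is genuinely open in your scheme: there one only knows $g\geq d+1$ and the Clifford-type bound $p_a(S)\leq\sum_{i=1}^{e}\left(g-\frac{id}{2}\right)$; for $e=2$ this gives $4p_a(S)\leq 8g-6d$, which is $<\frac{5}{2}g+d-8$ only for $g<\frac{14d-16}{11}$, leaving the whole range $\frac{14d-16}{11}\leq g\leq 2d-6$ uncovered (and $e=3$ is worse). It is exactly in this regime that the paper later needs Comessatti's bound and Corollary \ref{stime} --- but those occur in the proof of Theorem \ref{clebsch2}, downstream of the lemma, precisely because the lemma is designed to reduce everything to a bound on $3p_a(S)$.

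The missing idea is the paper's actual mechanism for getting coefficient $3$ on $p_a(S)$: it does not use \cite{BS}, Lemma 10.1.2, at all, but studies the adjoint surface itself. By Kodaira vanishing and Riemann--Roch, $\Sigma:=\phi(S)$ sits in $\Ps^R$ with $R=p_a(S)+g-1$ and has degree $\sigma=(K_S+H_S)^2=K_S^2+4g-4-d\geq R-1$; writing $\sigma-1=m(R-2)+\epsilon$, Castelnuovo theory ((\ref{Cbound}), (\ref{Cbounddue})) rules out $m=1$ because $p_g(\Sigma)=p_g(S)\geq p_a(S)>0$, and for $m=2$ gives $p_a(S)\leq\sigma-2(R-2)-1$, while $m\geq 3$ gives $\sigma\geq m(R-2)+1$; in every case one obtains $K_S^2\geq 3p_a(S)-2g+d-1$. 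Here the key point is that large $p_a(S)$ forces $\Sigma$ into a large ambient space, hence forces $\sigma$, hence $K_S^2$, to be large --- an effect invisible to the adjunction inequality you use. With coefficient $3$ the hypothesis $3p_a(S)<\frac{5}{2}g+d-9$ plugs into (\ref{ncs}) exactly, and the lemma closes in one stroke, with no case analysis on $e$ or on the size of $g$.
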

\begin{proof}
Consider the adjunction map
$\phi:S\to \Ps^R$. By Riemann--Roch one has
\begin{equation}\label{R}
R=h^0(S,\mathcal O_S(K_S+H_S))-1=p_a(S)+g-1
\end{equation}
since $h^i(S,\mathcal O_S(K_S+H_S))=0$, $1\leq i\leq 2$, by Kodaira vanishing theorem.
Let $\Sigma$ be the image
of $S$ via $\phi$ and let $\sigma$ be its degree. Except for a few cases in which $p_a(S)\leq 0$
(\cite{SVDV}, pg. 593-594), one knows that $\Sigma$ is a smooth surface, birational to $S$ via
$\phi$. In particular we have $\sigma\geq R-1$, i.e.
\begin{equation}\label{sigma}
\sigma=(K_S+H_S)^2=K_S^2+4g-4-d\geq R-1.
\end{equation}
Set
$$
\sigma-1=m(R-2)+\epsilon,\quad{\rm with }\quad 0\leq \epsilon\leq R-3.
$$

By (\ref{sigma}) we see that $m\geq 1$. The case
$m=1$ is not possible. In fact, by Castelnuovo Theory
\cite{EHarris, Harris} (see Remark \ref {finalr}, $(iii)$ and $(iv)$ above), we know that
\begin{equation}\label{Cbound}
p_g(\Sigma)\leq
\sum_{i=1}^{+\infty}{(i-1)}(\sigma-h_{\Gamma}(i)),
\end{equation}
where $\Gamma$ is the 
general $0$-dimensional linear section of  $\Sigma$. Moreover
\begin{equation}\label{Cbounddue}
\sigma\geq h_{\Gamma}(i)\geq \min\,\{\sigma,
i(R-2)+1\}\quad {\text{for any $i\geq 1$}}.
\end{equation}
If $m=1$ then $\sigma\leq 2(R-2)$, and from
(\ref{Cbound}) and (\ref{Cbounddue}) we get $p_g(\Sigma)=0$, against our assumption $p_g(\Sigma)=p_g(S)\geq
p_a(S)>0$ (compare also with (\cite{BS}, Lemma 10.1.2)).

When $m=2$,  by (\ref{Cbound}) and (\ref{Cbounddue}), we get
$$
p_a(S)\leq p_g(S)=p_g(\Sigma)\leq \sigma-h_\Gamma(2)\leq
\sigma-(2(R-2)+1)
$$
which implies $K_S^2\geq 3p_a(S)-2g+d-1$ (use (\ref{R}) and
(\ref{sigma})). Since  $3p_a(S)<\frac{5}{2}g+d-9$, we deduce
(\ref{ncs}), hence the assertion holds.

Finally assume $m\geq 3$. Since $\sigma\geq m(R-2)+1$, by
(\ref{R}) and (\ref{sigma}) we  have $ K_S^2\geq
mp_a(S)+(m-4)g+d+5-3m\geq 3p_a(S)-2g+d-1$
because $p_a(S)>0$ and $g\geq 3$ (see  Remark \ref{new}, $(ii)$). 
We conclude as in the case $m=2$.
\end{proof}

\begin{lemma}\label{tans}
Let $X\subset\Ps^r$ ($r\geq 2n+1$) be a nondegenerate, irreducible 
projective variety of dimension $n$ which is neither tangentially defective, nor secant defective. Then 
\begin{equation}\label{eq:bound}
t_X\geq 2(r-2n+1).
\end{equation}
\end{lemma}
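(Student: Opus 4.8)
The plan is to prove the lower bound in two movements: first extract the trivial ``factor one'' from the fact that ${\rm Tan}(X)$ is a nondegenerate variety of minimal possible dimension, and then upgrade it to the ``factor two'' by exploiting that ${\rm Tan}(X)$ is far from being a variety of minimal degree, being developable (its embedded tangent space is constant along each generator $T_{X,x}$) and singular along $X$. First I would record the basic geometry of ${\rm Tan}(X)$: it is the image of the incidence variety $I=\overline{\{(x,q):x\in X_{\rm sm},\ q\in T_{X,x}\}}$, a $\Ps^n$--bundle over the smooth locus of $X$, so it is irreducible; it contains $X$ and hence is nondegenerate; and, since $X$ is \emph{not} tangentially defective, it has dimension exactly $2n$ (this is the hypothesis $v_X>0$ read through Lemma \ref{tangdeg}). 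The minimal degree bound for a nondegenerate $2n$--dimensional variety in $\Ps^r$ then gives at once $t_X\geq r-2n+1$, which is the ``factor one'' and is half of what we want.

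The main engine for the missing factor of two is a reduction to the curve case. Let $C=X\cap\Lambda$ be a general curve section, with $\Lambda\cong\Ps^{r-n+1}$ a general linear space of codimension $n-1$; since general linear sections preserve both non--secant--defectivity and non--tangential--defectivity, $C$ is a nondegenerate, irreducible curve which is not tangentially defective. For $n=1$ the statement is the classical computation already recalled in the paper: by Remark \ref{finalr}$(i)$ one has $c_1(\mathcal N_{C,\Ps^{r-n+1}}(-1))=K_C+2H_C$, whence $t_C=v_C=2g_C-2+2d_C\geq 2(d_C-1)$, and nondegeneracy of $C$ forces $d_C\geq (r-n+1)$, so $t_C\geq 2(r-n)\geq 2(r-2n+1)$ (the last inequality using $n\geq 1$). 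The plan for general $n$ is then to compare $t_X$ with $t_C$: one has $\rm Tan(C)\subseteq {\rm Tan}(X)\cap\Lambda$, and I would argue that $\deg {\rm Tan}(X)\geq \deg {\rm Tan}(C)=t_C$, which combined with the curve estimate yields \eqref{eq:bound}. The developability of ${\rm Tan}(X)$ is what should make this comparison work: because the tangent space to ${\rm Tan}(X)$ is constant along each $n$--plane $T_{X,x}$, a general codimension--$2n$ linear section meets ${\rm Tan}(X)$ in exactly the $t_X$ points carried by tangent $n$--planes, each counted with the same multiplicity that appears in the curve count, so the two degrees should match up to the elementary inequalities above.

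I expect the comparison $t_X\geq t_C$ to be the main obstacle, precisely because ${\rm Tan}(C)$ sits inside ${\rm Tan}(X)\cap\Lambda$ in strictly smaller dimension (a surface inside an $(n+1)$--fold of degree $t_X$), so no formal inequality of degrees is automatic and the developable structure must genuinely be used. As a reality check on the size of the gap, note that the naive approaches fall short: projecting ${\rm Tan}(X)$ from a general linear span of $k\leq r-2n-1$ points of $X$, each a point of multiplicity $\geq 2$ on ${\rm Tan}(X)$, drops the degree by $2k$ and yields only $t_X\geq 2(r-2n)$, which is two short of \eqref{eq:bound}. Thus mere multiplicity two along $X$ is not enough, and the argument must capture the stronger developable tangency. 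I would therefore carry out the degree comparison on the incidence variety $I$, expressing $t_X\,w_X=\int_X s_n(\mathcal P^1(\oc_X(1)))$ (the top Segre number of the bundle of first principal parts, consistent with $v_X=t_X w_X$ of Lemma \ref{tangdeg}), and reduce the desired inequality to a positivity statement that restricts compatibly under general hyperplane section, the two subtleties to be handled being the control of the covering number $w_X$ and the passage to a resolution when $X$ is singular.
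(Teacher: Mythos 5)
Your proof has a genuine gap at its load-bearing step. Everything reduces to the comparison $t_X\geq t_C$ (or the unspecified ``positivity statement that restricts compatibly under general hyperplane section''), and this is exactly what you never prove. As you yourself note, it is not a formal degree inequality: ${\rm Tan}(C)$ is a surface inside the $(n+1)$--fold ${\rm Tan}(X)\cap\Lambda$ of degree $t_X$, and a lower-dimensional subvariety can have degree much larger than the ambient variety, so ``developability'' would have to be converted into an actual argument, which is absent. Worse, the claim you need is essentially an open question of this very paper: the general curve section has degree $d$ and genus $g$, so $t_C=2(d+g-1)\geq 2(d-n)$, and hence $t_X\geq t_C$ would imply \eqref{congunobis}, which the paper poses only as a question in Remark \ref{finalr}, $(i)$ (and which would yield the much stronger \eqref{congdue}). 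One cannot expect to fill this in routinely. There are secondary untreated problems as well: the lemma allows $X$ singular, in which case a general $\Lambda$ of codimension $n-1$ meets the $(n-1)$--dimensional part of ${\rm Sing}(X)$, so $C$ may itself be singular and neither $t_C=2(d+g-1)$ nor the Segre-class identity $t_Xw_X=v_X$ of Lemma \ref{tangdeg} applies verbatim; you flag the resolution and the control of $w_X$ as ``subtleties,'' but they are exactly where such an argument would have to do real work.

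Ironically, your ``reality check'' is the paper's actual mechanism, and the missing $2$ comes not from developability but from a sharper base case. The paper argues by induction on $r$, projecting from a general point $x\in X$: the Trisecant Lemma makes the projection birational on $X$; the image of ${\rm Tan}(X)$ is ${\rm Tan}(X')$ and is reached by a generically finite map (otherwise ${\rm Tan}(X)$ would be a cone with vertex a general point of $X$, forcing $X$ to be degenerate), and non-secant-defectivity descends likewise; since $x$ has multiplicity $\mu\geq 2$ on ${\rm Tan}(X)$, one gets $t_X=\mu+\nu t_{X'}\geq 2+t_{X'}$. The key extra point, which your projection count from $r-2n-1$ points misses, is the base case $r=2n+1$: there a general secant line $\ell$ of $X$ passes through two distinct points of $X$, each singular on ${\rm Tan}(X)$, and $\ell\not\subset{\rm Tan}(X)$ precisely because $X$ is not secant defective ($\dim{\rm Tan}(X)=2n<\dim{\rm Sec}(X)=2n+1$); hence $t_X\geq 4$ at the bottom, rather than the $\geq 2$ your minimal-degree bound provides, and the induction yields exactly $t_X\geq 2(r-2n-1)+4=2(r-2n+1)$. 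So the correct proof is entirely elementary and never needs a global comparison between $t_X$ and the curve section; I recommend you rebuild your argument along these lines.
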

\begin{proof} The main remark is that ${\rm Tan}(X)$ is singular along $X$,
as a local computation shows.

First we examine the case $r=2n+1$. Let $\ell$ be a general secant
line of $X$. Then $\ell$ contains two distinct
points of $X$, which are singular points of ${\rm Tan}(X)$.
Moreover $\ell$ is not contained in ${\rm Tan}(X)$, otherwise $X$ would
be secant defective against the assumption. It
follows that $t_X\geq 4$.

Next assume $r> 2n+1$ and argue by induction on $r$. Fix a general
point $x\in X$, and denote by $X'\subset \Ps^{r-1}$ the image of
$X$ via the projection $\phi$ from $x$. By the
Trisecant Lemma (\cite{Ciro}, Proposition 2.6, pg. 158), $\phi$ induces a
birational map of $X$ to $X'$. It also induces a generically finite map, of a
certain degree $\nu$, from ${\rm Tan}(X)$ to its image $V$. Otherwise
${\rm Tan}(X)$ would be a cone of vertex $x$ for a general $x\in X$, and
this would imply that $X$ is degenerate, for the set of vertices of a
cone is a linear space. In particular we have
$\dim(V)=\dim({\rm Tan}(X))=2n$. Since the general tangent space to $X$
projects to the general tangent space to $X'$, one has $V={\rm Tan}(X')$,
thus $X'$ is not tangentially defective.
The same argument says that
$\phi$ induces a generically finite map of
${\rm Sec}(X)$ to  ${\rm Sec}(X')$, thus $X'$, as well as $X$ is not secant defective.
By induction, we have
$t_{X'}\geq 2(r-2n)$.  Since $x$ is a
point of ${\rm Tan}(X)$ of multiplicity $\mu\geq 2$ we deduce
$t_X=\mu + \nu t_{X'}\geq 2 + t_{X'}\geq
2+2(r-2n)=2(r-2n+1)$.
\end{proof}

It is a nice problem, on which we do not dwell here, to determine all varieties $X$,
which are neither tangentially defective, nor secant defective, for which equality
holds in \eqref {eq:bound}. If $X$ is smooth and one believes that \eqref {congdue},
or rather \eqref {congunobis}, holds (see the discussion in Remark \ref {finalr}, $(i)$), 
then $X$ should conjecturally be a rational normal scroll. There are however singular
varieties $X$ reaching the bound \eqref {eq:bound}, which are not rational normal
scrolls.

\begin{corollary}\label{stime}
Let $S\subset\Ps^r$ ($r\geq 5$) be a nondegenerate, irreducible,
smooth, projective surface of degree $d$ and sectional genus $g$.
Let $C$ be a general hyperplane section of $S$. Then
$$
\quad \delta_S - {\binom {d-2} {2}}\leq
-p_a(S)+\sum_{i=2}^{+\infty}h^1(C,\mathcal O_C(i)).
$$
\end{corollary}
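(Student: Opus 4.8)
The plan is to avoid the double point formula (\ref{doublepoint}) altogether — its coefficient of $p_a(S)$ is too large to be absorbed by any cheap lower bound on $K^2_S$ — and instead to compare $S$ directly with its general curve section $C$ via Proposition \ref{formula} and Lemma \ref{tans}. First I would observe that every quantity occurring in the asserted inequality depends only on the pair $(S,\mathcal O_S(1))$: indeed $\delta_S$ is determined by the double point formula, while $C$ and the bundle $\mathcal O_C(1)$ are intrinsic to this pair. I may therefore re-embed $S$ by the complete linear system $|\mathcal O_S(1)|$ and assume from the outset that $S\subset\Ps^r$ is \emph{linearly normal}, i.e. $h^0(S,\mathcal O_S(1))=r+1$; this only enlarges $r$, so the hypothesis $r\geq 5$ persists. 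If $S$ were secant defective it would be the Veronese surface by Severi's theorem (Remark \ref{dpformula}, $(iii)$), for which $\delta_S=0$ and $g=p_a(S)=0$ and the inequality is immediate; so I assume $S$ is not secant defective.

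The main identity comes from Proposition \ref{formula}, $(ii)$ taken with $n=2$ and $Y=C$, after inserting the Pl\"ucker--Clebsch value $\delta_C={\binom {d-1} {2}}-g$:
\[
\delta_S-{\binom {d-2} {2}}=(d-2)-g-\frac{1}{2}\,v_S .
\]
Since $S$ is smooth it is not tangentially defective, so $v_S=t_Sw_S\geq t_S$ by Lemma \ref{tangdeg}, and as $S$ is also not secant defective Lemma \ref{tans} gives $t_S\geq 2(r-3)$; hence $v_S\geq 2(r-3)$ and
\[
\delta_S-{\binom {d-2} {2}}\leq (d+1)-g-r .
\]

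To conclude I would bound the right-hand side of the corollary from below by this same expression. Writing $q:=h^1(S,\mathcal O_S)$, so that $p_a(S)=p_g(S)-q$, inequality (\ref{pg}) gives $p_g(S)\leq h^1(C,\mathcal O_C(1))+\sum_{i=2}^{+\infty}h^1(C,\mathcal O_C(i))$, whence
\[
-p_a(S)+\sum_{i=2}^{+\infty}h^1(C,\mathcal O_C(i))\geq -h^1(C,\mathcal O_C(1))+q .
\]
Riemann--Roch on $C$ gives $h^1(C,\mathcal O_C(1))=h^0(C,\mathcal O_C(1))-d+g-1$, while the restriction sequence $0\to\mathcal O_S\to\mathcal O_S(1)\to\mathcal O_C(1)\to 0$ together with linear normality gives $h^0(C,\mathcal O_C(1))\leq h^0(S,\mathcal O_S(1))-1+q=r+q$. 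Substituting, the lower bound becomes exactly $(d+1)-g-r$, and chaining it with the upper bound above yields the corollary.

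The step I expect to require the most care is the bookkeeping of the irregularity $q$. The argument closes only because the surplus $+q$ produced by $p_a(S)=p_g(S)-q$ cancels precisely the term $q$ in the estimate $h^0(C,\mathcal O_C(1))\leq r+q$. For a non-linearly-normal embedding this estimate would instead read $h^0(C,\mathcal O_C(1))\leq r+q+\mu$ with $\mu=h^0(S,\mathcal O_S(1))-r-1>0$ the defect of linear normality, and the lower bound for the right-hand side would then fall short of $(d+1)-g-r$ by exactly $\mu$; this is why passing to the complete linear system $|\mathcal O_S(1)|$ at the very beginning is essential, and verifying that the cancellation is exact rather than merely favourable is the one point I would check most carefully.
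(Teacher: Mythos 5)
Your proof is correct, and its skeleton coincides with the paper's: reduce to the linearly normal case, combine Proposition \ref{formula}, $(ii)$ (with $\delta_C={\binom{d-1}{2}}-g$) with Lemma \ref{tans} to obtain $\delta_S-{\binom{d-2}{2}}\leq d+1-g-r$, then do cohomological bookkeeping. Where you genuinely diverge is the endgame. The paper converts $r+1=h^0(S,\mathcal O_S(1))$ via $\chi(\mathcal O_S(1))=\chi(\mathcal O_S)+\chi(\mathcal O_C(1))$ into the intermediate bound $\delta_S-{\binom{d-2}{2}}\leq -p_a(S)+h^2(S,\mathcal O_S(1))-h^1(S,\mathcal O_S(1))$, and then needs a fresh estimate, $h^2(S,\mathcal O_S(1))=h^0(S,\omega_S(-1))\leq\sum_{i\geq 2}h^1(C,\mathcal O_C(i))$, obtained by rerunning the residue-sequence argument behind (\ref{pg}) one twist down. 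You instead reuse (\ref{pg}) exactly as stated, together with Riemann--Roch on $C$ and the restriction sequence $0\to\mathcal O_S\to\mathcal O_S(1)\to\mathcal O_C(1)\to 0$; the cancellation of the irregularity $q$ that you flag is exact, as you verify, and your estimate is in fact equivalent to the paper's up to slack (the first step of the iteration behind (\ref{pg}) gives $p_g(S)-h^1(C,\mathcal O_C(1))\leq h^0(S,\omega_S(-1))$, so your lower bound for $\sum_{i\geq 2}h^1(C,\mathcal O_C(i))$ is a priori weaker, but the difference is absorbed by the term $-h^1(S,\mathcal O_S(1))\leq 0$ that the paper drops anyway). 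Your route buys a finish that requires no new cohomological lemma beyond what is already proved. One point where you are actually more careful than the paper: Lemma \ref{tans} assumes $S$ is not secant defective, and you correctly dispose of the Veronese surface first via Severi's theorem, whereas the paper invokes the lemma silently (smoothness excludes tangential defectivity by (5.37) of \cite{GH}, but not secant defectivity). Your implicit claim that re-embedding only helps should strictly be accompanied by the remark that the general member of the complete system $|\mathcal O_S(1)|$ has $h^1(C,\mathcal O_C(i))$ no larger than the general hyperplane section of the original embedding, so the inequality for the linearly normal model implies the stated one; this is the same silent step as the paper's \lq\lq we may assume $S$ is linearly normal\rq\rq, so it is not a gap peculiar to your argument.
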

\begin{proof}
We may assume $S$ is linearly normal, i.e.
$r=h^0(S,\mathcal O_{S}(1))-1$.
On the other hand, from Proposition \ref {formula}, $(ii)$, we see that
$$
\delta_S - {\binom {d-2} {2}}=-g+d-2-\frac{1}{2}v_S.
$$
Moreover by
Lemma \ref{tans} we have
$$
v_S\geq t_S\geq 2(r-3)=
2(h^0(S,\mathcal O_{S}(1))-4).
$$
Hence 
\begin{equation}\label{ngen}
\delta_S - {\binom {d-2} {2}}\leq-g+d-2-(h^0(S,\mathcal
O_{S}(1))-4).
\end{equation}
Since $\chi(\mathcal O_S(1))=\chi(\mathcal O_S)+\chi(\mathcal
O_C(1))$, we also have
$$
h^0(S,\mathcal O_{S}(1))-4=h^1(S,\mathcal O_{S}(1))-h^2(S,\mathcal
O_{S}(1))+(p_a(S)+d-2-g).
$$
From (\ref{ngen}) we deduce
$$
\delta_S - {\binom {d-2} {2}}\leq -p_a(S)+h^2(S,\mathcal
O_{S}(1))-h^1(S,\mathcal O_{S}(1)).
$$
With an argument similar to the one used to prove (\ref{pg}), one sees that
$
h^2(S,\mathcal O_S(1))=h^0(S,\omega_S(-1))\leq
\sum_{i=2}^{+\infty}h^1(C,\mathcal O_C(i)).
$
The assertion follows. 
\end{proof}

We are now in position to prove Theorem \ref{clebsch2}. \medskip

\begin{proof}[Proof of Theorem \ref{clebsch2}] By Remark \ref{new}, $(ii)$, we may assume $\frac{d+1}{2}<g\leq
2(d-2)$.

First we examine the range $\frac{d+1}{2}<g\leq d$. In this case
$e\leq 1$. Put $g_m^s:=|\mathcal O_C(1)|$, and set
$$
2g-2=(k-1)m+h,\quad {\rm with}\quad 0\leq h\leq m-1.
$$
By Comessatti's bound \cite{Comessatti} one knows that
\begin{equation}\label{comes}
s\leq \frac{2k(m-1)-2g}{k(k+1)}+1.
\end{equation}
In our case $s=h^0(C,\mathcal O_C(1))-1$, $m=d$ and $k=2$.
So $h^0(C,\mathcal O_C(1))-1\leq
\frac{2}{3}d-\frac{1}{3}g+\frac{1}{3}$. By Riemann-Roch Theorem we
deduce $h^1(C,\mathcal O_C(1))\leq
\frac{2}{3}g-\frac{1}{3}d+\frac{1}{3}$. Hence from (\ref{pg}) we
get 
$$3p_a(S)\leq 2g-d+1<\frac{5}{2}g+d-9$$
(the second inequality holds since we may assume $d\geq 5$). This proves
Theorem
\ref{clebsch2} in the range $\frac{d+1}{2}<g\leq d$ by
Lemma \ref{cs}.

Next, assume $d<g\leq \frac{3d+1}{2}$. Then $e\leq 2$. By applying 
(\ref{comes}) to
$|\mathcal O_C(i)|$, with $1\leq i\leq 2$, 
we  prove that 
$h^1(C,\mathcal O_C(1))\leq \frac{5}{6}g-\frac{1}{2}d+\frac{1}{2}$
and $h^1(C,\mathcal O_C(2))\leq
\frac{2}{3}g-\frac{2}{3}d+\frac{1}{3}$. 
From (\ref{pg}) we get 
$$3p_a(S)\leq
\frac{9}{2}g-\frac{7}{2}d+\frac{5}{2}<\frac{5}{2}g+d-9.$$
The second inequality holds when $d>8$ because $g\leq \frac{3d+1}{2}$ and when
$d\leq 8$ by Castelnuovo's bound. This proves Theorem
\ref{clebsch2} in the range $d<g\leq \frac{3d+1}{2}$ by Lemma
\ref{cs}.

Finally assume $\frac{3d+1}{2}<g\leq 2(d-2)$. Then $e\leq 3$.
Using again (\ref{comes}) one sees that
$h^1(C,\mathcal O_C(2))\leq \frac{2}{3}g-\frac{2}{3}d+\frac{1}{3}$
and  $h^1(C,\mathcal O_C(3))\leq \frac{2}{3}g-d+\frac{1}{3}$. 
By Remark \ref{new}, $(i)$, and Corollary
\ref{stime},  we may assume
$$
p_a(S)\leq \frac{1}{2}g+h^1(C,\mathcal O_C(2))+h^1(C,\mathcal
O_C(3)).
$$
So 
$$3p_a(S)\leq \frac{11}{2}g-5d+2<\frac{5}{2}g+d-9$$
(the second inequality holds because $g\leq 2(d-2)$). This concludes the
proof of Theorem \ref{clebsch2}.\end{proof}


\begin{thebibliography}{}



\bibitem{BS} Beltrametti, M.C.,  Sommese, A.J.: {\it The adjunction theory of complex projective
varieties}, de Gruyter Expositions in Mathematics, 1995.



\bibitem{Ciro} Chiantini, L. Ciliberto, C.: {\it Weakly defective varieties},
Trans. Amer. Math. Soc. ${\mathbf{354}}$, No. 1, 151-178 (2002).

\bibitem {cil} Ciliberto, C.: {\it On the Hilbert scheme of curves of maximal genus in a projective space}, Math. Z. \textbf{194}, 351--363 (1987). 



\bibitem{Comessatti} Comessatti, A.: {\it Limiti di variabilit\`a della dimensione e dell'ordine d'una $g_n^r$
sopra una curva di dato genere}, Atti R. Ist. Veneto Sci. Lett.
Arti $\bf{74}$ (1914/1915), 1685-1709.

\bibitem{EHarris} Eisenbud, D., Harris, J.: {\it Curves in projective space},
Presse de l'Universit\'e de Montreal, 1982.

\bibitem{EHarris2} Eisenbud, D.,  Harris, J.:  {\it On varieties of minimal degree (a centennial account)}, in   ``Algebraic geometry'', Bowdoin, 1985 (Brunswick, Maine, 1985),  3--13, Proc. Sympos. Pure Math., \textbf{46}, Part 1, Amer. Math. Soc., Providence, RI, 1987.






\bibitem{Fulton} Fulton, W.: {\it Intersection theory}, Ergebnisse
der Mathematik und ihrer Grenzgebiete; 3.Folge, Bd. 2,
Springer-Verlag,  1984.


\bibitem{GH} Griffiths, P., Harris, J.: {\it Algebraic geometry and local differential geometry}, Ann. Scient. \'Ec. Norm. Sup. $4^e$ s\'erie, $\bf 12$, $n^o 3$ (1979), 355-452.

\bibitem{Harris} Harris, J.: {\it A bound on the geometric genus of projective varities},
Ann. Scuola Norm. Sup. Pisa Cl. Sci. (4), $\bf{8}$, 35-68 (1981).



\bibitem{SVDV} Sommese, A.J. - Van de Ven, A.: {\it On the Adjunction Mapping},
Math. Ann., $\bf{278}$, 593-603 (1987).

\bibitem {Zak} Zak, F. L.: {\it Tangents and secants of algebraic varieties}, Translated from the Russian manuscript by the author, Translations of Mathematical Monographs, \textbf{127}, American Mathematical Society, Providence, RI, 1993. viii+164. 





\end{thebibliography}
\end{document}